	\numberwithin{equation}{section} 
\newtheorem{theorem}{\protect\theoremname}[section]
\newtheorem{definition}[theorem]{\protect\definitionname}
\newtheorem{lemma}[theorem]{\protect\lemmaname}
\newtheorem{proposition}[theorem]{\protect\propositionname}
\newtheorem{example}[theorem]{\protect\examplename}
\newtheorem{corollary}[theorem]{\protect\corollaryname}
\newtheorem{remark}[theorem]{\protect\remarkname}
\newtheorem{problem}[theorem]{\protect\problemname}
\newtheorem{notation}[theorem]{\protect\notationname}
\providecommand{\corollaryname}{Corollary}
\providecommand{\claimname}{Claim}
\providecommand{\definitionname}{Definition}
\providecommand{\lemmaname}{Lemma}
\providecommand{\notationname}{Notation}
\providecommand{\remarkname}{Remark}
\providecommand{\problemname}{Problem}
\providecommand{\propositionname}{Proposition}
\providecommand{\examplename}{Example}
\providecommand{\theoremname}{Theorem}
\providecommand{\conjecturename}{Conjecture}
\newcommand{\N}{\mathbb{N}}
\newcommand{\slantfrac}[2]{\,^#1\!/_#2}
\newcommand{\mc}{\mathcal}
\newcommand{\fsd}{F_{\sigma\delta}}
\newcommand{\ext}{\hat{\ }}
\newcommand{\baire}{\omega^\omega}
\newcounter{vkNoteCounter}
\newcounter{okNoteCounter}
\begin{document}

\begin{titlepage}
\title{Absolute $\mc F$-Borel classes}
\author{Vojtěch Kovařík\\
	E-mail: kovarikv@karlin.mff.cuni.cz 	\\
	\\
	Charles University \\
	Faculty of Mathematics and Physics \\
	Department of Mathematical Analysis \\
	\\
	Sokolovská 83, Karlín \\
	Praha 8, 186 00 \\
	Czech Republic}
	
\date{}

\maketitle

\renewcommand{\thefootnote}{}

\footnote{2010 \emph{Mathematics Subject Classification}: Primary 54H05; Secondary 54G20.}

\footnote{\emph{Key words and phrases}: compactification, Borel set, F-Borel class, absolute complexity.}

\renewcommand{\thefootnote}{\arabic{footnote}}
\setcounter{footnote}{0}

\begin{abstract}
We investigate and compare $\mc F$-Borel classes and absolute $\mc F$-Borel classes. We provide precise examples distinguishing these two hierarchies. We also show that for separable metrizable spaces, $\mc F$-Borel classes are automatically absolute. 
\end{abstract}

\end{titlepage}

\setcounter{page}{2}

\section{Introduction}\label{section: introduction}
\emph{Borel} sets (in some topological space $X$) are the smallest family containing the open sets, which is closed under the operations of taking countable intersections, countable unions and complements (in $X$). The family of \emph{$\mc F$-Borel} (resp. \emph{$\mc G$-Borel}) sets is the smallest family containing all closed (resp. open) sets, which is closed under the operations of taking countable intersections and countable unions of its elements. In metrizable spaces the families of Borel, $\mc F$-Borel and $\mc G$-Borel sets coincide. In non-metrizable spaces, open set might not necessarily be a countable union of closed sets, so we need to make a distinction between Borel, $\mc F$-Borel and $\mc G$-Borel sets.

In the present paper, we investigate absolute $\mc F$-Borel classes. While Borel classes are absolute by \cite{holicky2003perfect} (see Proposition \ref{proposition: HS} below), by \cite{talagrand1985choquet} it is not the case for $\mc F$-Borel classes (see Theorem \ref{theorem: talagrand} below). We develop a method of estimating absolute Borel classes from above, which enables us to compute the exact complexity of Talagrand's examples and to provide further examples by modifying them. Our main results are Theorem \ref{theorem: main theorem} and Corollary \ref{corollary: non K alpha space which is abs K alpha+1}.

The paper is organized as follows: In the rest of the introductory section, we define the Borel, $\mc F$-Borel and $\mc G$-Borel hierarchies and recall some basic results. In Section \ref{section: compactifications}, we recall the definitions and basic results concerning
compactifications and their ordering. In Theorem \ref{theorem: separable metrizable}, we
show that the complexity of separable metrizable spaces coincides with their absolute complexity (in any of the hierarchies). Section \ref{section: subspaces} is devoted to showing that absolute complexity is inherited by closed subsets. In Section \ref{section: sequences and brooms}, we study special sets of sequences of integers -- trees, sets which extend to closed discrete subsets of $\baire$ and the `broom sets' introduced by Talagrand. We study in detail the hierarchy of these sets using the notion of rank. In Section \ref{section: broom spaces and parametrization}, we introduce the class of examples of spaces used by Talagrand. We investigate in detail their absolute complexity and in Section \ref{section: absolute complexity of brooms}, we prove our main results.

Let us start by defining the basic notions. Throughout the paper, all the spaces will be Tychonoff. For a family of sets $\mc C$, we will denote by $\mc C_\sigma$ the collection of all countable unions of elements of $\mc C$ and by $\mc C_\delta$ the collection of all countable intersections of elements of $\mc C$.
\begin{definition}[Borel classes]\label{definition: borel classes}
Let $X$ be a topological space. We define the Borel multiplicative classes $\mc M_\alpha(X)$ and Borel additive classes $\mc A_\alpha(X)$, $\alpha<\omega_1$, as follows:
\begin{itemize}
\item $\mc M_0(X)=\mc A_0(X):=$ the algebra generated by open subsets of $X$,
\item $\mc M_\alpha(X):=\left( \underset {\beta<\alpha} \bigcup \left( \mc M_\beta(X) \cup \mc A_\beta(X) \right) \right)_\delta$ for $1\leq \alpha <\omega_1$,
\item $\mc A_\alpha(X):=\left( \underset {\beta<\alpha} \bigcup \left( \mc M_\beta(X) \cup \mc A_\beta(X) \right) \right)_\sigma$ for $1\leq \alpha <\omega_1$.
\end{itemize}
\end{definition}
In any topological space $X$, we have the families of closed sets and open sets, denoted by $F(X)$ and $G(X)$, and we can continue with classes $F_\sigma(X)$, $G_\delta(X)$, $F_{\sigma\delta}(X)$ and so on. However, this notation quickly gets impractical, so we use the following notation.
\begin{definition}[$\mc F$-Borel and $\mc G$-Borel classes]
We define the hierarchy of $\mc F$-Borel sets on a topological space $X$ as follows:
\begin{itemize}
\item $\mc F_1(X):=$ closed subsets of $X$,
\item $\mc F_\alpha(X):=\left( \underset {\beta<\alpha} \bigcup \mc F_\beta(X) \right)_\sigma$ for $2\leq \alpha <\omega_1$ even,
\item $\mc F_\alpha:=\left( \underset {\beta<\alpha} \bigcup \mc F_\beta (X)\right)_\delta$ for $3\leq \alpha <\omega_1$ odd.
\end{itemize}
The sets of $\alpha$-th $\mc G$-Borel class, $\mc G_\alpha(X)$, are the complements of $\mc F_\alpha(X)$ sets.
\end{definition}
By a \emph{descriptive class of sets}, we will always understand one of the Borel, $\mc F$-Borel or $\mc G$-Borel classes.

\begin{remark}\label{remark: F,G and M,A}
$(i)$ In any topological space $X$, we have
\[ \mc F_1(X) = F(X) \subset \mc M_0(X). \]
It follows that each class $\mc F_n$, $n\in\N$, is contained in $\mc A_{n-1}$ or $\mc M_{n-1}$ (depending on the parity). The same holds for classes $\mc F_\alpha$, $\alpha \geq \omega$, except that the difference between ranks disappears.

$(ii)$ If $X$ is metrizable, the Borel, $\mc F$-Borel and $\mc G$-Borel classes are related to the standard Borel hierarchy \cite[ch.11]{kechris2012classical}. In particular, each $\mc F_\alpha(X)$ is equal to either $\Sigma^0_\alpha(X)$ or $\Pi^0_\alpha(X)$ (depending on the parity of $\alpha$) and we have
\[ F(X)\cup G(X) \subset \mc M_0(X)=\mc A_0(X) \subset F_\sigma(X)\cap G_\delta(X). \]
The relations between our descriptive classes can then be summarized as:
\begin{itemize}
\item $\mc F_n(X)=\Pi^0_n(X)=\mc M_{n-1}(X)$ holds if $1\leq n<\omega$ is odd,
\item $\mc F_n(X)=\Sigma^0_n(X)=\mc A_{n-1}(X)$ holds if $2\leq n<\omega$ is even,
\item $\mc F_\alpha(X)=\Sigma^0_\alpha(X)=\mc A_\alpha(X)$ holds if $\omega\leq \alpha <\omega_1$ is even,
\item $\mc F_\alpha(X)=\Pi^0_\alpha(X)=\mc M_\alpha(X)$ holds if $\omega\leq \alpha <\omega_1$ is odd.
\end{itemize}
Clearly, the $\mc G$-Borel classes satisfy the dual version of $(i)$ and $(ii)$.
\end{remark}
Note also that in compact spaces, closed sets are compact, so in this context the $\mc F$-Borel sets are sometimes called $\mc K$-Borel, $F_\sigma$ sets are called $K_\sigma$ and so on.

Let us define two notions central to the topic of our paper.
\begin{definition}\label{definition: F sigma delta spaces}
Let $X$ be a (Tychonoff) topological space and $\mc C$ be a descriptive class of sets. We say that $X$ is a \emph{$\mc C$ space} if there exists a compactification $cX$ of $X$, such that $X\in\mc C(cX)$.

If $X\in\mc C(Y)$ holds for any Tychonoff topological space $Y$ in which $X$ is densely embedded, we say that $X$ is an \emph{absolute $\mc C$ space} We call the class $\mc C$ absolute if every $\mc C$ space is an absolute $\mc C$ space.
\end{definition}

The basic relation between complexity and absolute complexity are noted in the following remark.
\begin{remark}\label{remark: 1-5}
Consider the following statements:
\begin{enumerate}[(i)]
\item $X\in\mc C(cX)$ holds for some compactification $cX$;
\item $X\in\mc C(\beta X)$ holds for the Čech-Stone compactification;
\item $X\in\mc C(cX)$ holds for every compactification $cX$;
\item $X\in\mc C(Y)$ holds for every Tychonoff space where $Y$ is densely embedded;
\item $X\in\mc C(Y)$ holds for every Tychonoff space where $Y$ is embedded.
\end{enumerate}
Clearly, the implications $(v)\implies(iv)\implies(iii)\implies(ii)\implies(i)$ always hold. In the opposite direction, we always have $(i)\implies(ii)$ (this is standard, see Remark \ref{remark: absolute complexity}) and $(iii)\implies (iv)$. For Borel and $\mc F$-Borel classes, $(iv)$ is equivalent to $(v)$ (since these classes are closed under taking intersections with closed sets). For $\mc G$-Borel classes, $(iv)$ is never equivalent to $(v)$ (just take $Y$ so large that $\overline{X}^Y$ is not $G_\delta$ in $Y$).
\end{remark}
The interesting part is therefore the relation between $(i)$ and $(iii)$ from Remark \ref{remark: 1-5}. For the first two levels of $\mc F$-Borel and $\mc G$-Borel hierarchies, $(i)$ is equivalent to $(iii)$, that is, these classes are absolute. A more precise formulation is given in the following remark.

\begin{remark}\label{remark: F,G,Fs,Gd are absolute}
For a topological space $X$ we have
\begin{enumerate}[(i)]
	\item $X$ is an $\mc F_1$ space $\iff$ $X$ is absolutely $\mc F_1$ $\iff$ $X$ is compact;
	\item $X$ is an $\mc F_2$ space $\iff$ $X$ is absolutely $\mc F_2$ $\iff$ $X$ is $\sigma$-compact;
	\item $X$ is a $\mc G_1$ space $\iff$ $X$ is absolutely $\mc G_1$ $\iff$ $X$ is locally compact;
	\item $X$ is a $\mc G_2$ space $\iff$ $X$ is absolutely $\mc G_2$ $\iff$ $X$ is Čech-complete
\end{enumerate}
(for the proofs of $(iii)$ and $(iv)$ see \cite[Theorem 3.5.8 and Theorem 3.9.1]{engelking1989general}).
\end{remark}

The first counterexample (and the only one known to the author) was found by Talagrand, who showed that already the class of $\fsd$ sets is not absolute:
\begin{theorem}[\cite{talagrand1985choquet}]\label{theorem: talagrand}
There exists an $\mc F_3$ space $T$ and its compactification $K$, such that $T$ is not $\mc F$-Borel in $K$.
\end{theorem}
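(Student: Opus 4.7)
The plan is to construct $T$ as a countable ``broom space'' and exhibit two very different compactifications: a tame one witnessing $T \in \mc F_3$, and a large one $K$ in which $T$ escapes the entire $\mc F$-Borel hierarchy. As the underlying set I would take $T = \omega^{<\omega} \sqcup A$, where $A$ is a suitably chosen subset of $\baire$; finite sequences are declared isolated in $T$, and a neighborhood base at $x \in A$ consists of sets of the form $\{x\} \cup \{x|_n : n \geq N\}$. This makes $T$ a zero-dimensional Tychonoff space in which each $x \in A$ is the limit of its initial segments along the tree.

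For the $\mc F_3$ bound, I would construct a specific compactification $cT$ by extending $\omega^{<\omega}$ to a compact metrizable hull (for instance via its canonical embedding into a product of one-point compactifications) and adjoining $A$ along its natural embedding into $\baire$, arranging that each individual broom $\{x\} \cup \{x|_n : n \geq N\}$ is closed in $cT$. Then the ``limit'' part of $T$ can be written as $\bigcap_N \bigcup_{x \in A}(\{x\} \cup \{x|_n : n \geq N\})$, exhibiting it as $\fsd$ in $cT$; combining with the $F_\sigma$ set $\omega^{<\omega}$ of isolated points yields $T \in \mc F_3(cT)$.

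For the failure of absoluteness, I would take $K$ to be a specifically engineered compactification (in the extreme case $K = \beta T$) in which the closure of $\omega^{<\omega}$ acquires ``phantom'' limit points that encode the full descriptive complexity of $A$ as a subset of $\baire$. One then argues by contradiction: any hypothetical membership $T \in \mc F_\alpha(K)$ would, by pulling back along a suitable continuous map $\baire \to K$ that sends $A$ into $T$ and $\baire \setminus A$ into $K \setminus T$, yield a Borel description of rank $\alpha$ for $A$ inside $\baire$. Choosing $A$ to be a coanalytic non-Borel set (so that $A$ lies outside every $\mc F_\alpha(\baire)$) then excludes every $\alpha < \omega_1$ simultaneously.

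The main obstacle — and the technical heart of the argument — is constructing $K$ together with the reduction map so that closures inside $K$ faithfully transport the original Baire-space topology at every transfinite level. Concretely, one must identify enough traces of sets from $\omega^{<\omega}$ on the remainder $K \setminus T$ to read off, from any $\mc F$-Borel decomposition of $T$, a matching Borel decomposition of $A$; achieving this uniformly across the hierarchy typically forces a specific ultrafilter-based choice of $K$ (or a careful ``parametrized'' variant of $\beta T$) and a delicate inductive unraveling of the way closures in $K$ interact with the broom structure.
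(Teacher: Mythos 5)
The first thing to note is that the paper does not prove this statement itself: it is quoted from Talagrand's paper, and only the underlying construction (the broom spaces $X_{\mc E}$ of Section \ref{section: broom spaces and parametrization}) is recalled later. Your proposal therefore has to stand on its own, and it fails already at the level of the underlying set.

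Your space $T=\omega^{<\omega}\sqcup A$ contains $A$ as an \emph{uncountable closed discrete} subspace: the neighbourhood $\{x\}\cup\{x|_n:n\ge N\}$ of $x\in A$ meets $A$ only in $\{x\}$, and no isolated point of $\omega^{<\omega}$ can be a limit of $A$. Hence $T$ is not Lindelöf (the cover consisting of $\omega^{<\omega}$ and one broom per point of $A$ has no countable subcover). But an $\mc F$-Borel subset of a compact space is $\mc K$-analytic and in particular Lindelöf --- exactly the fact invoked in the proof of Proposition \ref{proposition: absoluteness is hereditary}. So for a coanalytic non-Borel (hence uncountable) $A$, your $T$ is not $\mc F_3$, indeed not $\mc F$-Borel, in \emph{any} compactification, and the first half of the statement already fails for it. The error is visible in your $\fsd$ formula: $\bigcup_{x\in A}\bigl(\{x\}\cup\{x|_n:n\ge N\}\bigr)$ is an uncountable union of closed sets, which is not an $F_\sigma$ operation. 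Talagrand's actual example avoids this by being a \emph{one-point} Lindelöfication $X_{\mc E}=\baire\cup\{\infty\}$ of the discrete space $\baire$: there is a single non-isolated point, and the canonical $\fsd$ witness $\{\infty\}\cup\bigcap_n\bigcup_{s\in\omega^n}\overline{\mc N(s)}$ uses only the countable index sets $\omega^n$. The non-$\mc F$-Borelness in a suitable compactification is then obtained by a combinatorial argument about the broom family $\mc E$, not by a reduction from a coanalytic non-Borel set; in your sketch the continuous map $\baire\to K$ with $f^{-1}(T)=A$, which carries the entire burden of the lower bound, is never constructed --- but even granting it, the first defect is fatal.
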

This not only shows that none of the classes $\mc F_\alpha$ for $\alpha\geq 4$ are absolute, but also that the difference between absolute and non-absolute complexity can be large. Indeed, the `non-absolute complexity' of $T$ is `$\mc F_3$', but its `absolute complexity' is `$\mc K$-analytic'. In \cite{kalenda2017absolute}, the authors give a sufficient condition for an $\mc F_3$ space to be absolutely $\mc F_3$, but a characterization of absolutely $\mc F_\alpha$ spaces is still unknown for all $\alpha\geq 3$.

In \cite{holicky2003perfect} and \cite{raja2002some}, the authors studied (among other things) absoluteness of Borel classes. In particular, the following result is relevant to our topic:
\begin{proposition}[{\cite[Corollary 14]{holicky2003perfect}}]\label{proposition: HS}
For every $1\leq \alpha < \omega_1$, the classes $\mc A_\alpha$ and $\mc M_\alpha$ are absolute.
\end{proposition}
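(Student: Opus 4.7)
The plan is to use the core technical result of \cite{holicky2003perfect}: for any perfect continuous surjection $\pi : K \to L$ between compact Hausdorff spaces, both $\mc M_\alpha$ and $\mc A_\alpha$ are preserved under $\pi$ in both directions. Preservation under preimages is trivial from continuity; preservation under images is the substantive part, proved by transfinite induction on $\alpha$, the base case requiring a careful analysis of how closed maps behave on the algebra generated by open sets.

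First, I would reduce absoluteness to comparing pairs of compactifications of $X$. If $X$ is densely embedded in a Tychonoff space $Y$, any compactification $cY$ of $Y$ is simultaneously a compactification of $X$, and since the Borel classes are preserved by restriction to subspaces, it suffices to prove that $X \in \mc M_\alpha(c_1 X)$ for some compactification $c_1 X$ implies $X \in \mc M_\alpha(c_2 X)$ for every compactification $c_2 X$.

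Given the key lemma, the easy direction of the comparison is when $c_1 X \geq c_2 X$: then there is a perfect continuous surjection $\pi : c_1 X \to c_2 X$ that is the identity on $X$, and since $X = \pi(X)$ the image part of the lemma immediately yields $X \in \mc M_\alpha(c_2 X)$. To handle arbitrary $c_2 X$, one reduces further to the single equivalence $X \in \mc M_\alpha(cX) \text{ for some } cX \iff X \in \mc M_\alpha(\beta X)$, using the maximality of $\beta X$ among compactifications.

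The genuinely difficult direction, and the main obstacle, is going \emph{up} in the compactification ordering, i.e., from an arbitrary $cX$ to $\beta X$ (or equivalently to a dominating $c_1 X \vee c_2 X$). Naively pulling back $X$ by the relevant perfect surjection $\pi_{cX} : \beta X \to cX$ only produces the typically strictly larger set $\pi_{cX}^{-1}(X) \in \mc M_\alpha(\beta X)$, not $X$ itself. The argument in \cite{holicky2003perfect} circumvents this by lifting a witnessing $\mc M_\alpha$-representation of $X$ inside $cX$ directly to one in $\beta X$ level-by-level, crucially exploiting that $\mc M_\alpha$ and $\mc A_\alpha$ are closed under complements (in contrast to the $\mc F_\alpha$ hierarchy): the excess fibre points in $\pi_{cX}^{-1}(X) \setminus X$ can be absorbed by intersecting with suitable lower-level complementary sets that do transfer cleanly. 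This complement-closedness is precisely the feature that fails for the $\mc F$-Borel classes, which is exactly why absoluteness breaks down there and produces Talagrand's example from Theorem \ref{theorem: talagrand}.
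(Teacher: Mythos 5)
The paper does not prove this proposition at all --- it is imported verbatim as \cite[Corollary 14]{holicky2003perfect} --- so what has to be judged is whether your reconstruction of that proof is sound. It is not, and the central problem is that you have the two directions of the comparison exactly backwards. For the natural map $\pi : c_1X \to c_2X$ between compactifications with $c_1X \succeq c_2X$ one has $\pi(c_1X \setminus X) = c_2X \setminus X$ (remainders map onto remainders, \cite[Theorem 3.5.7]{engelking1989general}), hence $\pi^{-1}(X) = X$ \emph{exactly}. There are no ``excess fibre points in $\pi_{cX}^{-1}(X)\setminus X$'' to absorb: the passage from a smaller compactification to a larger one (in particular from $cX$ to $\beta X$) is the trivial preimage argument, which is precisely the paper's Remark \ref{remark: absolute complexity} and the reason the implication $(1)\implies(2)$ of Remark \ref{remark: 1-5} is called standard. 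The direction you dismiss as easy --- descending from $c_1X$ (or $\beta X$) to an arbitrary smaller $c_2X$ --- is the one where all the content lives, and it is exactly the direction in which Talagrand's space fails: $T$ is $F_{\sigma\delta}$ in $\beta T$ but not $\mc F$-Borel in the smaller compactification $K$.

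For that hard direction your ``key lemma'' is false as stated: continuous surjections between compact Hausdorff spaces do not carry $\mc M_\alpha$ \emph{subsets} to $\mc M_\alpha$ subsets (continuous images of Borel sets in compacta are in general only analytic), so ``$X = \pi(X)$ plus the image part of the lemma'' proves nothing. What \cite{holicky2003perfect} actually establishes is a theorem about \emph{spaces}: a perfect surjection $f : X \to Y$ preserves the property of being an (absolute) Borel class $\alpha$ space in both directions, and the substantive transfinite induction --- carried out via complete sequences of covers rather than by ``intersecting with lower-level complementary sets'' --- is precisely the step your sketch replaces with a gesture. One then applies this to $\pi|_X : X \to X$, which is perfect because $\pi^{-1}(X) = X$. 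So the skeleton of a correct argument is recoverable from your outline, but as written the genuinely difficult step is both mislocated and unproved, and the concluding heuristic about why $\mc F$-Borel classes fail is aimed at a problem ($\pi^{-1}(X) \supsetneq X$) that does not occur.
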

Another approach is followed in \cite{marciszewski1997absolute} and \cite{junnila1998characterizations}, where the absoluteness is investigated in the metric setting (that is, for spaces which are of some class $\mc C$ when embedded into any \emph{metric} space). In \cite{junnila1998characterizations} a characterization of `metric-absolute' $\mc F_3$ spaces is given in terms of existence complete sequences of covers (a classical notion used by Frolík for characterization of Čech-complete spaces, see \cite{frolik1960generalizations}) which are $\sigma$-discrete. Unfortunatelly, this is not applicable to the topological version of absoluteness, because every countable cover is (trivially) $\sigma$-discrete, and any $\mc F_3$ space does have such a cover by \cite{frolik1963descriptive} -- even the Talagrand's non-absolute space.

\section{Compactifications and their ordering}\label{section: compactifications}
By a \emph{compactification} of a topological space $X$ we understand a pair $(cX,\varphi)$, where $cX$ is a compact space and $\varphi$ is a homeomorphic embedding of $X$ onto a dense subspace  of $cX$. Symbols $cX$, $dX$ and so on will always denote compactifications of $X$.

Compactification $(cX,\varphi)$ is said to be \emph{larger} than $(dX,\psi)$, if there exists a continuous mapping $f : cX\rightarrow dX$, such that $\psi = f \circ \varphi$. We denote this as $ cX \succeq dX $. Recall that for a given $T_{3\slantfrac{1}{2}}$ topological space $X$, its compactifications are partially ordered by $\succeq$ and Stone-Čech compactification $\beta X$ is the largest one.

Often, we encounter a situation where $X\subset cX$ and the corresponding embedding is identity. In this case, we will simply write $cX$ instead of $(cX,\textrm{id}|_X)$.

Much more about this topic can be found in many books, see for example \cite{freiwald2014introduction}. The basic relation between the complexity of a space $X$ and the ordering of compactifications is the following:
\begin{remark}\label{remark: absolute complexity}
If $\mc C$ is a descriptive classes of sets, we have
 \[ X\in\mc C(dX), \ cX\succeq dX \implies X\in\mc C(cX). \]
In particular, $X$ is a $\mc C$ space if and only if $X\in\mc C(\beta X)$.
\end{remark}
We will also make use of the following result about existence of small metrizable compactifications.
\begin{proposition}\label{proposition: metrizable sub-compactification}
Let $X$ be a separable metrizable space and $cX$ its compactification. Then $X$ has some metrizable compactification $dX$, such that $dX\preceq cX$.
\end{proposition}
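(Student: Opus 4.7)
The plan is to realise $dX$ as a continuous image of $cX$ inside the Hilbert cube, produced by a carefully chosen countable family $\mc F\subseteq C(cX,[0,1])$. Any such image is automatically compact and metrizable, so the task reduces to making $\mc F$ rich enough that its induced map embeds $X$ homeomorphically and places the image densely inside the image of $cX$.

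The key step is collecting enough functions. Fix a countable base $\{V_n\}_{n\in\omega}$ of $X$. For a fixed $V_n$ and fixed $x\in V_n$, the identity $\overline{X\setminus V_n}^{cX}\cap X=X\setminus V_n$ (because $V_n$ is open in the subspace $X\subseteq cX$) shows that $x$ is disjoint from $\overline{X\setminus V_n}^{cX}$; Urysohn's lemma in the compact Hausdorff space $cX$ then supplies some $f\in C(cX,[0,1])$ with $f(x)>0$ and $f\equiv 0$ on $X\setminus V_n$. Letting $x$ vary, the resulting open sets $f^{-1}((0,1])\cap X$ are contained in $V_n$ and cover it. Since $V_n$ is Lindelöf (as an open subspace of the separable metric space $X$), a countable subcover yields a countable family $\mc F_n\subseteq C(cX,[0,1])$. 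Setting $\mc F:=\bigcup_n\mc F_n$, I claim $\mc F|_X$ separates points from closed sets in $X$: given $x\in X$ and closed $F\subseteq X$ missing $x$, pick a basic $V_n$ with $x\in V_n\subseteq X\setminus F$ and apply the covering property of $\mc F_n$ to obtain some $f\in\mc F_n$ with $f(x)>0$ while $f\equiv 0$ on $F$.

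The remainder is routine. Enumerate $\mc F$ and define $\Phi:cX\to[0,1]^\omega$ by $\Phi(y):=(f(y))_{f\in\mc F}$; this is continuous, and the standard diagonal-embedding argument shows that $\Phi|_X$ is a homeomorphic embedding because $\mc F|_X$ separates points from closed sets in the Tychonoff space $X$. Set $dX:=\Phi(cX)$. It is compact and metrizable as a subspace of $[0,1]^\omega$, and $\Phi(X)$ is dense in $dX$ because $X$ is dense in $cX$ and $\Phi$ is continuous. Hence $dX$ is a metrizable compactification of $X$ via $\Phi|_X$, and the continuous surjection $\Phi:cX\to dX$ itself witnesses $dX\preceq cX$.

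The delicate point is the extraction of a countable family from the a priori uncountable pool of Urysohn functions on $cX$. This is precisely where both separability and metrizability of $X$ enter — through the Lindelöf property of its open subspaces — and it is what lets the argument pass from the huge algebra $C(cX,[0,1])$ to a countable subfamily large enough to cut out a metrizable quotient of $cX$ that still contains $X$ as a dense subspace.
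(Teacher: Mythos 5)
Your proof is correct and follows essentially the same route as the paper: the Tychonoff evaluation construction applied to a countable family of functions on $cX$ separating points of $X$ from closed sets of $X$, with countability extracted from the countable base (you via Urysohn plus the Lindelöf property of the $V_n$, the paper via Tietze plus the countable base). You simply spell out the details that the paper delegates to a citation, in particular realizing $dX$ directly as $\Phi(cX)\subseteq[0,1]^\omega$ so that $\Phi$ itself witnesses $dX\preceq cX$.
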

This proposition is an exercise, so we only include a sketch of its proof:
\begin{proof}
We can assume that $cX\subset [0,1]^\kappa$ for some $\kappa$. Since $X$ has a countable base, there is a countable set of coordinates $I\subset \kappa$ such that the family of all $V \cap X$,
where $V$ is a basic open set $V\subset [0,1]^\kappa$ depending on coordinates in $I$ only is a base
of $X$. Then we can take $dX := \pi_I (cX)$, where $\pi_I$ denotes the projection.
\end{proof}

We put together several known results to obtain the following theorem (which we have not found anywhere in the literature):
\begin{theorem}\label{theorem: separable metrizable}
Let $\mc C$ be a descriptive class of sets. Then any separable metrizable $\mc C$ space is an absolute $\mc C$ space (so the first four conditions of Remark \ref{remark: 1-5} are equivalent).

If $\mc C$ is one of the Borel or $\mc F$-Borel classes, this is further equivalent to $X$ being in $\mc C(Y)$ for every Tychonoff space in which $Y$ is embedded.
\end{theorem}
\begin{proof}
The statement for the classes $\mc A_\alpha$ and $\mc M_\alpha$ follows from the general result of \cite{holicky2003perfect} (see Proposition \ref{proposition: HS} above). Let us continue by proving the result for the $\mc F$-Borel classes. For the first two levels, it follows from Remark \ref{remark: F,G,Fs,Gd are absolute}.
Suppose that $X\in \mc F_\alpha(\beta(X))$ for some $\alpha\ge 3$ and let $cX$ be any compactification of $X$.

By Proposition \ref{proposition: metrizable sub-compactification} we can find a smaller metrizable compactification $dX$. Let $\mc D$ be the Borel class corresponding to $\mc F_\alpha$ (see Remark \ref{remark: F,G and M,A}). Then $X\in \mc D(\beta X)$. Since $\mc D$ is absolute by Proposition \ref{proposition: HS}, we get $X\in\mc D(dX)$. Since $dX$ is metrizable, we deduce that $X\in\mc F_\alpha(dX)$, so $X\in \mc F_\alpha(cX)$ holds by Remark \ref{remark: absolute complexity}.

The proof for $\mc G$-Borel classes is analogous.

\end{proof}

\section{Hereditarity of absolute complexity} \label{section: subspaces}
In this section, we show that absolute complexity is hereditary with respect to closed subspaces. To do this, we first need to be able to extend compactifications of subspaces to compactifications of the original space. We start with a topological lemma:

\begin{lemma}\label{lemma: quotient of K}
Suppose that $K$ is compact, $F\subset K$ is closed and $f:F\rightarrow L$ is a continuous surjective mapping. Define a mapping $q: K \to L\cup (K\setminus F)$ as $q(x)=x$ for $x\in K\setminus F$ and $q(x)=f(x)$ for $x\in F$. Then $\left(K\setminus F\right)\cup L$ equipped with the quotient topology (induced by $q$) is a compact space.
\end{lemma}
\begin{proof}
Since $f$ is surjective, the space $(K\setminus F)\cup L$ coincides with the adjunction space $K\cup_f L$ determined by $K$, $L$ and $f$ (for definition, see \cite[below Ex. 2.4.12]{engelking1989general}). Since $K\cup_f L$ is a quotient of a compact space $K\oplus L$ with respect to a closed equivalence relation (\cite[below Ex. 2.4.12]{engelking1989general}), we get by Alexandroff theorem (\cite[Thm 3.2.11]{engelking1989general}) that the space $K\cup_f L$ is compact (in particular, it is Hausdorff).
\end{proof}
This gives us a way to extend a compactification, provided that we already know how to extend some bigger compactification:
\begin{proposition}\label{proposition: extending a compactification}
Let $Y$ be a topological space and suppose that for a closed $X\subset Y$ we have $cX\preceq \overline{X}^{dY}$ for some compactifications $cX$ and $dY$. Then $cX$ is equivalent to $\overline{X}^{cX}$ for some compactification $cY$ of $Y$.
\end{proposition}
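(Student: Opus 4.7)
The plan is to realize $\delta Y$ as a quotient of $dY$ that collapses $\overline{X}^{dY}$ onto $cX$. By the assumption $cX \preceq \overline{X}^{dY}$, there is a continuous surjection $f \colon \overline{X}^{dY} \to cX$ restricting to the identity on $X$. Applying Lemma \ref{lemma: quotient of K} with $K = dY$ and $F = \overline{X}^{dY}$ yields a compact Hausdorff space $\delta Y := (dY \setminus F) \cup cX$ together with a continuous quotient map $\psi \colon dY \to \delta Y$ which equals $f$ on $F$ and the identity on $dY \setminus F$.

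Three things remain to be verified: that $\psi(Y)$ is dense in $\delta Y$, that $\overline{X}^{\delta Y} = cX$, and that $\psi|_Y$ is a topological embedding. Density is automatic from density of $Y$ in $dY$ and surjectivity of $\psi$. For the closure identity, the subspace and quotient topologies on $cX$ inside $\delta Y$ coincide (a short check using that $dY \setminus F$ is open in $dY$), so $cX \subseteq \delta Y$ is closed with $X$ dense in it, giving $\overline{X}^{\delta Y} = cX$. Injectivity of $\psi|_Y$ is immediate: since $X$ is closed in $Y$, we have $Y \cap F = X$, so $\psi$ sends $Y \setminus X$ identically into $\delta Y \setminus cX$ while sending $X$ identically into $cX$.

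The main obstacle is checking that $\psi|_Y$ is open onto its image at points $a \in X$. This rests on the intermediate fact $f^{-1}(X) = X$, proved as follows: if $p \in F$ and $q := f(p) \in X$, take a net $(x_\alpha) \subseteq X$ with $x_\alpha \to p$ in $F$; then $x_\alpha = f(x_\alpha) \to q$ in $cX$, hence $x_\alpha \to q$ in $X$ and therefore in $F$, so $p = q \in X$ by Hausdorffness. Given a neighborhood $U = V \cap Y$ of $a$ in $Y$ with $V$ open in $dY$, the natural candidate is
\[
W := (V \setminus F) \cup \bigl(cX \setminus f(F \setminus V)\bigr).
\]
Compactness of $F \setminus V$ makes $f(F \setminus V)$ closed in $cX$, and $f^{-1}(X) = X$ together with $a \in V$ forces $a \notin f(F \setminus V)$, so $a \in W$. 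Openness of $W$ in $\delta Y$ reduces to openness in $F$ of the saturation $\{p \in F : f^{-1}(f(p)) \subseteq V\}$, which follows from $f$ being a closed map between compact Hausdorff spaces. A direct check gives $\psi^{-1}(W) \cap Y \subseteq U$, establishing that $\psi|_Y$ is the desired embedding and completing the proof.
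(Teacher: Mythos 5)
Your proof is correct and follows essentially the same route as the paper: both apply Lemma \ref{lemma: quotient of K} with $K=dY$, $F=\overline{X}^{dY}$, $H=cX$ to build $\delta Y$, and both establish openness of $\psi|_Y$ at points of $X$ by exhibiting the complement of the $\psi$-image of a compact set (your $W$ is exactly $\delta Y\setminus\psi(dY\setminus V)$). Your explicit verification that $f^{-1}(X)=X$ is a worthwhile addition, since the paper's identity $V\cap\psi(Y)=\psi(U\cap Y)$ silently relies on it.
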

\begin{proof}
Let $X$, $Y$, $cX$ and $dY$ be as above and denote by $f:\overline{X}^{dY}\rightarrow cX$ the mapping which witnesses that $cX$ is smaller than $\overline{X}^{dY}$. We will assume that $X\subset cX$ and $Y\subset dY$, which means that $f|_X=\textrm{id}_X$.

By Lemma \ref{lemma: quotient of K} (with $K:=dY$, $F:=\overline{X}^{dY}$ and $L:=cX$), the space $cY$, defined by the formula
\[ cY:= \left(dY\setminus \overline{X}^{dY} \right)\cup cX =  \left(K\setminus F\right)\cup L, \]
is compact. To show that $cY$ is a compactification of $Y$, we need to prove that the mapping $q: dY\rightarrow cY$, defined as $f$ extended to $dY\setminus \overline{X}^{dY}$ by identity, is a homeomorphism when restricted to $Y$. The continuity of $q|_Y$ follows from the continuity of $q$. The restriction is injective, because $\psi$ is injective on $\left( dY \setminus \overline{X}^{dY}\right) \cup X$ (and $X$ is closed in $Y$). The definition of $q$ and the fact that $X$ is closed in $Y$ imply that the inverse mapping is continuous on $Y\setminus X$. To get the continuity of $(q|_Y)^{-1}$ at the points of $X$, let $x\in X$ and $U$ be a neighborhood of $x$ in $K$. We want to prove that $q(U\cap Y)$ is a neighborhood of $q(x)$ in $q(Y)$. To see this, observe that $C:=dY\setminus U$ is compact, therefore $q(C)$ is closed, its complement $V:=cY \setminus q(C)$ is open and it satisfies $V\cap q(Y) = q(U\cap Y)$.

Lastly, we note that for $y\in cY\setminus \overline{q(X)}^{cY}$, $q^{-1}(y)$ is in $dY\setminus \overline{X}^{dY}$. This gives the second identity on the following line, and completes the proof:
\[ \overline{X}^{cY} = \overline{q(X)}^{cY} = q\left( \overline{X}^{dY} \right) = f\left( \overline{X}^{dY} \right)=cX. \]
\end{proof}

Recall that a subspace $X$ of $Y$ is said to be \emph{$C^\star$-embedded} in $Y$ if any bounded continuous real function on $X$ can be continuously extended to $Y$.
\begin{corollary}\label{corollary: extending all compactifications}
If a closed set $X$ is $C^\star$-embedded in $Y$, then each compactification $cX$ is of the form $cX=\overline{X}^{cY}$ for some compactification $cY$.
\end{corollary}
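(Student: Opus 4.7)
The plan is to reduce the statement to Proposition \ref{proposition: extending a compactification} by exhibiting, for each compactification $cX$, some compactification $dY$ of $Y$ with $cX \preceq \overline{X}^{dY}$. Once such a $dY$ is found, Proposition \ref{proposition: extending a compactification} directly produces the desired $\delta Y$ satisfying $\overline{X}^{\delta Y} = cX$, and we are done.

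The natural choice is to take $dY := \beta Y$, the Čech-Stone compactification of $Y$. The key point is then the standard characterization of $C^\star$-embeddings: a subspace $X \subset Y$ is $C^\star$-embedded in $Y$ if and only if $\overline{X}^{\beta Y}$ is (canonically homeomorphic to) $\beta X$. I would recall the short argument: every bounded continuous $f : X \to \mb R$ extends to $\tilde{f} : Y \to \mb R$ by assumption, which in turn extends to $\beta Y \to \mb R$; by the universal property of $\beta X$, the inclusion $X \hookrightarrow \overline{X}^{\beta Y}$ therefore extends to a continuous map $\beta X \to \overline{X}^{\beta Y}$, and a density/compactness argument shows this map is a homeomorphism.

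Having identified $\overline{X}^{\beta Y}$ with $\beta X$, the remainder is immediate: since $\beta X$ is the largest compactification of $X$, we have $cX \preceq \beta X = \overline{X}^{\beta Y}$ for every compactification $cX$ of $X$. Applying Proposition \ref{proposition: extending a compactification} with this $dY = \beta Y$ yields a compactification $\delta Y$ of $Y$ such that $\overline{X}^{\delta Y} = cX$.

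The main (and only) obstacle is the identification $\overline{X}^{\beta Y} = \beta X$ under the $C^\star$-embedding hypothesis; everything else is a formal invocation of the previous proposition. This identification is classical (see e.g. Engelking), but it is worth stating explicitly in the proof since it is precisely what the $C^\star$-embedding assumption is used for, and it is what makes $\beta Y$ a \emph{large enough} compactification of $Y$ to dominate any given compactification of $X$.
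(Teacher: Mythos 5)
Your proposal is correct and follows exactly the paper's own argument: identify $\overline{X}^{\beta Y}$ with $\beta X$ via the $C^\star$-embedding hypothesis, note that every $cX$ is then dominated by $\overline{X}^{\beta Y}$, and invoke Proposition \ref{proposition: extending a compactification}. The only difference is that you sketch the (standard) proof of the identification, which the paper leaves as an exercise.
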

\begin{proof}
Recall that the Čech-Stone compactification $\beta X$ is characterized by the property that each bounded continuous function from $X$ has a unique continuous extension to $\beta X$. Using this characterization, it is a standard exercise to show that $X$ is $C^\star$-embedded in $Y$ if and only if $\overline{X}^{\beta Y}=\beta X$. It follows that for $X$ and $Y$ as above, any compactification $cX$ is smaller than $\overline{X}^{\beta Y}$, and the result follows from Proposition \ref{proposition: extending a compactification}.
\end{proof}
We use these results to get the following corollary:
\begin{proposition}\label{proposition: absoluteness is hereditary}
Let $\mc C$ be one of the classes $\mc F_\alpha$, $\mc M_\alpha$ or $\mc A_\alpha$ for some $\alpha<\omega_1$.
\begin{enumerate}[(i)]
\item Any closed subspace of a $\mc C$ space is a $\mc C$ space;
\item Any closed subspace of an absolute $\mc C$ space is an absolute $\mc C$ space.
\end{enumerate}
\end{proposition}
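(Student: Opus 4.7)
The plan is to handle the two parts in sequence. For Part~1, I would use a direct trace argument: given $Y\in\mc C(cY)$ for some compactification $cY$ and a closed subspace $X\subset Y$, take $cX:=\overline X^{cY}$, which is a compactification of $X$. Since $X$ is closed in $Y$, the equality $X=Y\cap cX$ holds inside $cY$, and each of the classes $\mc F_\alpha$, $\mc M_\alpha$, $\mc A_\alpha$ is preserved under intersecting with a subspace (the trace of a $\mc C$-set in $cY$ on $cX$ is a $\mc C$-set of $cX$), so $X\in\mc C(cX)$ follows immediately and $X$ is a $\mc C$ space.

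For Part~2, the classes $\mc M_\alpha$ and $\mc A_\alpha$ are absolute by Proposition~\ref{proposition: HS}, so absolute $\mc C$ coincides with $\mc C$ for them and Part~2 reduces to Part~1. The substantive case is $\mc C=\mc F_\alpha$. Given $Y$ absolutely $\mc F_\alpha$, a closed $X\subset Y$, and an arbitrary compactification $cX$ of $X$, my plan is to realise $cX$ as $\overline X^{\delta Y}$ for some compactification $\delta Y$ of $Y$ via Corollary~\ref{corollary: extending all compactifications}. Once such $\delta Y$ is in hand, absoluteness of $Y$ gives $Y\in\mc F_\alpha(\delta Y)$, and the trace argument from Part~1 applied inside $\delta Y$ concludes that $X=Y\cap\overline X^{\delta Y}\in\mc F_\alpha(cX)$.

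The step I expect to be the main obstacle is the application of Corollary~\ref{corollary: extending all compactifications}, whose hypothesis is that $X$ be $C^\star$-embedded in $Y$. For closed subspaces of general Tychonoff spaces this is not automatic; in the bad case $\overline X^{\beta Y}$ is strictly below $\beta X$ and some compactifications $cX$ of $X$ are not dominated by $\overline X^{\beta Y}$, so Proposition~\ref{proposition: extending a compactification} cannot be invoked directly with $dY=\beta Y$. A natural route around this obstacle is to first establish $X\in\mc F_\alpha(eX)$ for an auxiliary compactification $eX$ that is both $\preceq cX$ and $\preceq \overline X^{\beta Y}$ (a common refinement of $cX$ and $\overline X^{\beta Y}$ from below in the lattice of compactifications of $X$), which is accessible via Proposition~\ref{proposition: extending a compactification} applied with $dY=\beta Y$, and then transfer the conclusion up to $cX$ through Remark~\ref{remark: absolute complexity}. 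Constructing such an $eX$ (or verifying that the required meet exists) is where the real technical work lies.
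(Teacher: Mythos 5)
Your Part 1 and your reduction of Part 2 to Part 1 for the Borel classes $\mc M_\alpha$, $\mc A_\alpha$ coincide with the paper's argument. The problem is in the $\mc F_\alpha$ case of Part 2. You correctly identify $C^\star$-embeddedness of $X$ in $Y$ as the hypothesis that must be checked before Corollary \ref{corollary: extending all compactifications} can be invoked, but the workaround you sketch has a genuine gap: it requires a compactification $eX$ of $X$ lying below both $cX$ and $\overline{X}^{\beta Y}$ in the ordering $\preceq$, and such a common lower bound need not exist. The poset of compactifications of a Tychonoff space is in general only an upper semilattice (it is a lattice precisely when the space is locally compact), so ``verifying that the required meet exists'' is not deferred technical work but the point where the argument can break down. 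As written, the proof of Part 2 for $\mc F_\alpha$ is incomplete.

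The obstacle you are trying to route around is in fact not present, and this is the content of the paper's proof. Since $Y$ is an $\mc F_\alpha$ space, we have $Y\in\mc F_\alpha(\beta Y)$, so $Y$ is an $\mc F$-Borel subset of a compact space and hence $\mc K$-analytic; $\mc K$-analytic spaces are Lindel\"of, and a Lindel\"of Tychonoff space is normal. By Tietze's theorem the closed subspace $X$ is therefore $C^\star$-embedded in $Y$ (equivalently $\overline{X}^{\beta Y}=\beta X$), so Corollary \ref{corollary: extending all compactifications} applies to \emph{every} compactification $cX$ of $X$: each one is of the form $\overline{X}^{\delta Y}$ for some compactification $\delta Y$ of $Y$. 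Absoluteness of $Y$ then gives $Y\in\mc F_\alpha(\delta Y)$, and your trace argument from Part 1 finishes the proof. With this normality observation inserted, your outline becomes exactly the paper's proof and no auxiliary $eX$ is needed.
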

\begin{proof}
$(i)$: This is trivial, since if $X$ is a closed subspace of $Y$ and $Y$ is of the class $\mc C$ in some compactification $cY$, then $X$ is of the same class in the compactification $\overline{X}^{cY}$.

$(ii)$: For Borel classes, which are absolute, the result follows from the first part -- therefore, we only need to prove the statement for the $\mc F$-Borel classes. Let  $\alpha<\omega_1$ and assume that $X$ is a closed subspace of some $\mc F_\alpha$ space $Y$.

Firstly, if $Y$ is an $\mc F_\alpha$ subset of $\beta Y$, it is $\mc K$-analytic and, in particular, Lindelöf. Since we also assume that $Y$ is Hausdorff, we get that $Y$ is normal \cite[Propositions 3.4 and 3.3]{kkakol2011descriptive}. By Tietze's theorem, $X$ is $C^\star$-embedded in $Y$, which means that every compactification of $X$ can be extended to a compactification of $Y$ (Corollary \ref{corollary: extending all compactifications}). We conclude the proof as in $(i)$.
\end{proof}

Note that the second part of Proposition \ref{proposition: absoluteness is hereditary} does not hold, in a very strong sense, if we replace the closed subspaces by $\fsd$ subspaces. Indeed, the space $T$ from Theorem \ref{theorem: talagrand} is an $\fsd$ subspace of a compact space, but it is not even absolutely $\mc F$-Borel (that is, $\mc F$-Borel in every compactification). Whether anything positive can be said about $F_\sigma$ subspaces of absolute $\mc F_\alpha$ spaces is unknown.

\section{Special sets of sequences}\label{section: sequences and brooms}
In this chapter, we define `broom sets' -- a special type sets of finite sequences, which have a special `tree' structure and all of their infinite extensions are closed and discrete. As in \cite{talagrand1985choquet}, these sets will then be used to construct spaces with special complexity properties.

\subsection{Trees and a rank on them}
First, we introduce the (mostly standard) notation which will be used in the sequel.
\begin{notation}[Integer sequences of finite and infinite length]\label{notation: sequences}
We denote
\begin{itemize}
\item $\omega^\omega:=$ infinite sequences of non-negative integers $:=\left\{\sigma : \omega \rightarrow \omega\right\}$,
\item $\omega^{<\omega}:=$ finite sequences of non-neg. integers $:=\left\{s : n \rightarrow \omega |\ n\in\omega \right\}$.
\end{itemize}
Suppose that $s\in\omega^{<\omega}$ and $\sigma\in\baire$. We can represent $\sigma$ as $(\sigma(0),\sigma(1),\dots)$ and $s$ as $(s(0),s(1),\dots,s(n-1))$ for some $n\in\omega$. We denote the \emph{length} of $s$ as $|s|=\textrm{dom}(s)=n$, and set $|\sigma|=\omega$. If for some $t\in \omega^{<\omega} \cup \baire$ we have $|t|\geq |s|$ and $t|_{|s|}=s$, we say that $u$ \emph{extends} $s$, denoted as $t\sqsubset t$. We say that  $u,v\in\omega^{<\omega}$ are non-comparable, denoting as $u\perp v$, when neither $u\sqsubset v$ nor $u\sqsupset v$ holds.

Unless we say otherwise, $\omega^\omega$ will be endowed with the standard product topology, whose basis consists of sets $\mc N (s):=\left\{ \sigma \sqsupset s | \ \sigma\in\baire \right\}$, $s \in \omega^{<\omega}$. 

For $n\in\omega$, we denote by $(n)$ the corresponding sequence of length $1$. By $s\ext t$ we denote the concatenation of a sequences $s\in\omega^{<\omega}$ and $t\in\omega^{<\omega}\cup \baire$. We will also use this notation to extend finite sequences by integers and sets, using the convention $s \hat{\ } k:=s \hat{\ } (k)$ for $k\in\omega$ and $s \hat \ T:=\left\{s\hat\ t\ |\ t\in T\right\}$ for $T\subset \omega^{<\omega} \cup \baire$.
\end{notation}

\begin{definition}[Trees on $\omega$]
A \emph{tree} (on $\omega$) is a set $T\subset \omega^{<\omega}$ which satisfies
\[ (\forall s,t\in\omega^{<\omega}): s\sqsubset t \ \& \ t\in T \implies s\in T. \]
By $\textrm{Tr}$ we denote the space of all trees on $\omega$. For $S\subset \omega^{<\omega}$ we denote by $\mathrm{cl}_\mathrm{Tr}(S):=\left\{u\in\omega^{<\omega} | \ \exists s\in S: s\sqsupset u \right\}$ the smallest tree containing $S$. Recall that the empty sequence $\emptyset$ can be thought of as the `root' of each tree, since it is contained in any nonempty tree.

If each of the initial segments $\sigma|n$, $n\in\omega$, of some $\sigma\in\baire$ belongs to $T$, we say that $\sigma$ is an infinite branch of $T$. By $\textrm{WF}$ we denote the space of all trees which have no infinite branches (the `well founded' trees).
\end{definition}

We define a variant of the standard derivative on trees and the corresponding $\Pi^1_1$-rank. We do not actually use the fact that the rank introduced in Definition \ref{definition: derivative} is a $\Pi^1_1$ rank, but more details about such ranks and derivatives can be found in \cite[ch. 34 D,E]{kechris2012classical}.
\begin{definition}\label{definition: derivative}
Let $T\in \textrm{Tr}$. We define its `infinite branching' \emph{derivative} $D_i(T)$ as
\[ D_i(T):=\left\{ t\in T| \ t\sqsubset s \textrm{ holds for infinitely many }s\in T \right\}. \]
This operation can be iterated in the standard way:
\begin{align*}
D^0_i(T) & :=  T, \\
D^{\alpha +1}_i \left( T \right) & :=  D_i\left(D_i^\alpha\left(T\right)\right) \textrm{ for successor ordinals},\\
D_i^\lambda \left( T \right) & := \underset {\alpha < \lambda } \bigcap D_i^\alpha \left( T \right) \textrm{ for limit ordinals}.
\end{align*}
This allows us to define a rank $r_i$ on $\textrm{Tr}$ as $r_i(\emptyset):=-1$, $r_i(T):=0$ for finite trees and
\[ r_i(T):=\min \{ \alpha<\omega_1| \ D_i^{\alpha+1}\left( T \right) = \emptyset \}. \]
If no such $\alpha<\omega_1$ exists, we set $r_i(T):=\omega_1$.
\end{definition}

Note that on well founded trees, the derivative introduced in Definition \ref{definition: derivative} behaves the same way as the derivative from \cite[Exercise 21.24]{kechris2012classical}, but it leaves any infinite branches untouched. It follows from the definition that the rank $r_i$ of a tree $T$ is countable if and only if $T$ is well founded. We consider this approach better suited for our setting.

While this definition of derivative and rank is the most natural for trees, we can extend it to all subsets $S$ of $\omega^{<\omega}$ by setting $D_i(S):=D_i(\mathrm{cl}_\mathrm{Tr}(S))$ and defining the rest of the notions using this extended `derivative'. Clearly, if $S\in Tr$, this coincides with the original definition.

\begin{lemma}\label{lemma: rank and finite covers}
No set $S\subset \omega^{<\omega}$ which satisfies $r_i(S)<\omega_1$ can be covered by finitely many sets of rank strictly smaller than $r_i(S)$.
\end{lemma}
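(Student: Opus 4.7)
The plan is to reduce to trees and then establish a distributional identity for the iterated derivative. Writing $T := \mathrm{cl}_\mathrm{Tr}(S)$ and $T_k := \mathrm{cl}_\mathrm{Tr}(S_k)$ for the hypothetical cover, tree closure commutes with finite unions, so $T \subseteq \bigcup_k T_k$; passing to the decomposition $T = \bigcup_k (T \cap T_k)$ and using that $D_i$ (and hence $r_i$) is monotone under inclusion of trees, it suffices to prove the following tree version: if $T = T_1 \cup \cdots \cup T_n$ for trees, then $r_i(T) \leq \max_k r_i(T_k)$. The monotonicity of $r_i$ follows by transfinite induction from the obvious monotonicity of $D_i$ on trees.

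The technical heart is the distributional identity
\[ D_i^\alpha(T_1 \cup \cdots \cup T_n) = D_i^\alpha(T_1) \cup \cdots \cup D_i^\alpha(T_n) \]
valid for all trees $T_k$ and all ordinals $\alpha < \omega_1$. Granting this identity, set $\alpha := r_i(T)$ and $\beta := \max_k (r_i(T_k) + 1)$. If each $r_i(T_k) < \alpha$, then a brief case split on whether $\alpha$ is a successor or a limit shows that $\beta = \gamma + 1$ for some $\gamma < \alpha$. The identity then yields $D_i^\beta(T) = \bigcup_k D_i^\beta(T_k) = \emptyset$, so $r_i(T) \leq \gamma < \alpha$, contradicting $r_i(T) = \alpha$.

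The identity is proved by transfinite induction on $\alpha$, and the successor step reduces to the base observation that $D_i$ itself distributes over finite unions of trees: if $t \in D_i(\bigcup_k T_k)$, then $t$ has infinitely many strict extensions in the union, so by pigeonhole infinitely many lie in a single $T_k$; by the tree property $t \in T_k$ as well, hence $t \in D_i(T_k)$, and the reverse inclusion is immediate from monotonicity. The main obstacle is the limit case: at a limit $\lambda$ one must exchange a finite union with a descending intersection, which is not a formal set-theoretic identity. The trick is that for fixed $t$, the set $A_k := \{\alpha < \lambda : t \in D_i^\alpha(T_k)\}$ is downward closed in $\lambda$ (because the sequence $D_i^\alpha(T_k)$ descends with $\alpha$), hence an initial segment of $\lambda$. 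A finite union of proper initial segments of $\lambda$ attains a maximum and is therefore itself a proper initial segment; so if $\bigcup_k A_k = \lambda$, some $A_k$ must already equal $\lambda$, giving $t \in D_i^\lambda(T_k)$ and closing the induction.
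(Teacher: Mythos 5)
Your proof is correct and follows essentially the same route as the paper: reduce to trees via $\mathrm{cl}_\mathrm{Tr}$ commuting with finite unions, establish $D_i^\alpha(T_1\cup\dots\cup T_n)=\bigcup_k D_i^\alpha(T_k)$, and conclude $r_i(S_1\cup\dots\cup S_k)=\max_j r_i(S_j)$. The only difference is that the paper merely asserts the distributional identity, whereas you verify it, including the genuinely non-obvious limit stage via the initial-segment argument, which is a worthwhile addition.
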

\begin{proof}
Note that if $T_1$ and $T_2$ are trees, $T_1\cup T_2$ is a tree as well, and we have $D_i^\alpha(T_1\cup T_2)=D_i^\alpha(T_1)\cup D_i^\alpha(T_2)$ for any $\alpha<\omega_1$. Moreover, for any $S_1,S_2\subset \omega^{<\omega}$ we have $\mathrm{cl}_\mathrm{Tr}(S_1\cup S_2)=\mathrm{cl}_\mathrm{Tr}(S_1)\cup \mathrm{cl}_\mathrm{Tr}(S_2)$. This yields the formula
\[ r_i(S_1\cup\dots\cup S_k)=\max \{ r_i(S_1),\dots,r_i(S_k) \} \]
for any $S_i\subset \omega^{<\omega}$, which gives the result.
\end{proof}

\subsection{Sets which extend into closed discrete subsets of \texorpdfstring{$\baire$}{the Baire space}}
\begin{definition}\label{definition: D}
A set $A\subset \baire$ is said to be an infinite extension of $B\subset \omega^{<\omega}$, if there exists a bijection $\varphi : B\rightarrow A$, such that $(\forall s\in B) : \varphi (s)\sqsupset s$. If $\tilde A$ has the same property as $A$, except that we have $\tilde A \subset \omega^{<\omega}$, it is said to be a (finite) extension of $B$.

By $\mc D\subset \mc P\left(\omega^{<\omega}\right)$ we denote the system of those sets $D\subset \omega^{<\omega}$ which satisfy
\begin{itemize}
\item[(i)] $(\forall s,t\in D)$ : $s\neq t \implies s \perp t$;
\item[(ii)] every infinite extension of $D$ is closed and discrete in $\baire$.
\end{itemize}  
\end{definition}

\begin{example}[Broom sets of the first class]\label{example: brooms}
Let $h\in \omega^{<\omega}$ and $s_n\in\omega^{<\omega}$ for $n\in\omega$ be finite sequences and suppose that $(f_n)_{n\in\omega}$ is an injective sequence of elements of $\omega$. Using these sequences, we define a `broom' set $B$ as
\[ B:=\left\{ h\hat{\ }f_n\hat{\ }s_n|\ n\in\omega\right\}. \]
The intuition is that $h$ is the `handle' of the broom, $s_n$ are the `bristles', and the sequence $(f_n)_n$ causes these bristles to `fork out' from the end of the handle. Note that the injectivity of $(f_n)_n$ guarantees that $B\in \mc D$ (see Proposition \ref{proposition: properties of D}).
\end{example}

\begin{proposition}[Properties of $\mc D$]\label{proposition: properties of D}
The system $\mc D$ has the following properties (where $D$ is an arbitrary element of $\mc D$):
\begin{enumerate}[(i)]
\item $\{ (n) | \ n\in\omega \} \in \mc D$;
\item $(\forall E\subset \omega^{<\omega}): E\subset D \implies E\in\mc D$;
\item $(\forall E\subset \omega^{<\omega}): E$ is a finite extension of $D \implies E\in\mc D$;
\item $\left(\forall h\in\omega^{<\omega}\right): h\ext D\in\mc D$;
\item For each sequence $\left(D_n\right)_n$ of elements of $\mc D$ and each one-to-one enumeration  $\left\{d_n| \ n\in\omega\right\}$ of $D$, we have $\bigcup _{n\in\omega} d_n\hat{\ }D_n \in \mc D$.
\end{enumerate}
\end{proposition}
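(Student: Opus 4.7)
The plan is to verify the five properties in order; Properties 1--4 are essentially direct, and the real work is in Property 5. Throughout, I will use the following hereditary principle: any subset $F$ of a closed discrete set $A \subset \baire$ is itself closed discrete, because any point of $F$ is isolated in $A$ (and hence in $F$) and any putative limit point of $F$ in $\baire$ would be a limit point of $A$. Property 1 is then immediate: the sequences $(n)$ are pairwise incomparable by distinctness, and any infinite extension $\{\sigma_n\}$ with $\sigma_n \sqsupset (n)$ sits in pairwise disjoint basic clopen sets $\mc N((n))$, so each $\sigma_n$ is isolated in the set, and any $\tau\in\baire\setminus\{\sigma_n\}$ is separated from all but possibly $\sigma_{\tau(0)}$ by a small enough neighborhood.

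For Property 2, incomparability is inherited by subsets, and any infinite extension of $E \subset D$ can be completed (using any extension of $D\setminus E$) to an infinite extension of $D$, which is closed discrete; the hereditary principle then gives the conclusion. Property 3: if $\varphi : D\to E$ witnesses that $E$ is a finite extension of $D$, then composing an infinite extension $A$ of $E$ with $\varphi$ produces an infinite extension of $D$ (which is closed discrete in $\baire$, and is precisely $A$). Incomparability of $E$ follows because $\varphi(d_1)\sqsubseteq\varphi(d_2)$ would force $d_1, d_2$ to both be prefixes of $\varphi(d_2)$, hence comparable in $D$. Property 4 is handled by observing that $\sigma \mapsto h\ext\sigma$ is a homeomorphism from $\baire$ onto the clopen set $\mc N(h)$, and that it respects $\sqsubset$; the rest is bookkeeping.

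For Property 5, let $B := \bigcup_n d_n\ext D_n$. To verify incomparability, suppose $d_m\ext a \sqsubseteq d_n\ext b$ with $d_m\ext a \neq d_n\ext b$. If $m=n$ we contradict incomparability in $D_m$. If $m\neq n$, then $d_m\sqsubseteq d_n\ext b$, so $d_m$ and $d_n$ are both prefixes of $d_n\ext b$, hence comparable, contradicting $D\in\mc D$. Now let $A = \bigcup_n d_n\ext A_n$ be an infinite extension of $B$, where $A_n$ is an infinite extension of $D_n$, hence closed discrete in $\baire$. Each $x = d_n\ext a \in A$ is isolated in $A$: pick a neighborhood $U\subset\mc N(d_n)$ isolating $d_n\ext a$ in $d_n\ext A_n$ (possible by Property 4 applied to $D_n$), and note $U$ misses every other $d_m\ext A_m$ because the $\mc N(d_m)$ are pairwise disjoint.

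The main obstacle is closedness of $A$ in $\baire$. Suppose $\sigma \in \overline{A}$ and fix distinct $x_k \in A$ with $x_k \to \sigma$. There are two cases. If infinitely many $x_k$ lie in a single $d_n\ext A_n$, then $\sigma$ lies there too by Property 4 applied to $D_n$, so $\sigma\in A$. Otherwise, after passing to a subsequence we may assume $x_k = d_{n_k}\ext a_k$ with the $n_k$ all distinct. The key trick is that $\{d_{n_k}\}\subset D$ belongs to $\mc D$ by Property 2, and $\{x_k\}$ is an infinite extension of $\{d_{n_k}\}$, hence closed and discrete in $\baire$. But $x_k\to\sigma$ in the closed discrete set $\{x_k\}$ forces the sequence to be eventually constant, contradicting distinctness. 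This rules out the ``diagonal'' case and completes the proof.
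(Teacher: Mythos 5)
Your proof is correct, and for Property 5 it takes a genuinely different route from the paper's (the paper, like you, treats Properties 1--4 as immediate). The paper first reformulates condition (ii) of Definition \ref{definition: D} combinatorially: for a set $A$ of pairwise incomparable finite sequences, every infinite extension of $A$ is closed discrete if and only if for every $\sigma\in\baire$ there is an $m$ such that $\sigma|m$ is comparable with at most one element of $A$. It then verifies this for $E=\bigcup_n d_n\ext D_n$ by a two-stage choice of witness: the $m$ coming from $D$ isolates at most one $d_{n_0}$, and the $m_0$ coming from $d_{n_0}\ext D_{n_0}$ finishes the job via $m'=\max\{m,m_0\}$. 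You instead work directly with the topology of $\baire$: you decompose an arbitrary infinite extension along the pairwise disjoint clopen sets $\mc N(d_n)$, prove discreteness and closedness separately, and handle a putative accumulation point by a case split (infinitely many terms inside one $d_n\ext A_n$ versus terms spread across infinitely many branches). Your key observation that a ``diagonal'' sequence $x_k=d_{n_k}\ext a_k$ with distinct $n_k$ is itself an infinite extension of the subset $\{d_{n_k}\}\subset D$ --- hence closed discrete by Property 2 --- is a clean way to dispose of the cross-branch case, and it is a genuine alternative to the paper's uniform-neighborhood argument. The paper's reformulation buys a single witness $\sigma|m'$ that kills both closedness and discreteness at once and avoids extracting convergent subsequences; your version is a bit longer but bootstraps from the earlier items of the proposition rather than introducing an auxiliary equivalence. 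One cosmetic remark: in your first case the phrase ``$\sigma$ lies there too'' only uses closedness of $d_n\ext A_n$; since that set is also discrete, infinitely many distinct $x_k$ converging to $\sigma$ is already impossible, so the case is vacuous --- but either reading completes your argument.
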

\begin{proof}
The first four properties are obvious. To prove $5.$, let $D$, $D_n$, $E:=\bigcup _{n\in\omega} d_n\hat{\ }D_n$ be as above. The show that $E$ satisfies (i) from Definition \ref{definition: D}, suppose that $e=d_n\ext e',\ f=d_m\ext f' \in D$ are comparable. Then $d_n$ and $d_m$ are also comparable, which means that $m=n$ and $e',f'\in D_n=D_m$ are comparable. It follows that $e'=f'$ and hence $e=f$.

For $A\subset \omega^{<\omega}$, the condition (ii) from Definition \ref{definition: D} is equivalent to
\begin{equation}\label{eq: equivalence}
(\forall \sigma \in \baire)(\exists m\in\omega): \  \left\{ s\in A | \ s \textrm{ is comparable with } \sigma|m \right\} \textrm{ is finite}.
\end{equation}
Indeed, suppose \eqref{eq: equivalence} holds and let $H$ be an infinite extension of $A$. For $\sigma\in\baire$ let $m\in\omega$ be as in \eqref{eq: equivalence}. Any $\mu\in H\cap \mc N(\sigma|m)$ is an extension of some $s\in A$, which must be comparable with $\sigma|m$. It follows that the neighborhood $\mc N(\sigma|m)$ of $\sigma$ only contains finitely many elements of $H$, which means that $\sigma$ is not a cluster point of $H$ and $H$ is discrete.
In the opposite direction, suppose that the condition does not hold. Then we can find a one-to-one sequence $(s_m)_{m\in\omega}$ of elements of $A$, such that $s_m \sqsupset \sigma|m$ for each $m\in\omega$. Then $\sigma$ is a cluster point of (any) infinite extension of $\{ s_m|\ m\in\omega \}$.

If $A$ also satisfies (i) from Definition \ref{definition: D}, \eqref{eq: equivalence} is clearly equivalent to
\begin{equation}\label{eq: equivalence with (i)}
(\forall \sigma \in \baire)(\exists m\in\omega): \  \sigma|m \textrm{ is comparable with at most one } s\in A.
\end{equation}
To see that $E$ satisfies \eqref{eq: equivalence with (i)}, let $\sigma\in \baire$. Since $D\in \mc D$, let $m$ be the integer obtained by applying \eqref{eq: equivalence with (i)} to this set. If no $d\in D$ is comparable with $\sigma|m$, then neither is any $e\in E$ and we are done.

Otherwise, let $d_{n_0}$ be the only element of $D$ comparable with $\sigma|m$ and let  $m_0$ be the integer obtained by applying \eqref{eq: equivalence with (i)} to $d_{n_0}\ext D_{n_0}$. Then $m':=\max\{m_0,m\}$ witnesses that $E$ satisfies \eqref{eq: equivalence with (i)}: Indeed, any $e=d_n\ext e'$ comparable with $\sigma|{m'}$ must have $d_n$ comparable with $\sigma|{m'}$. In particular, $d_n$ is comparable with $\sigma|m$. It follows that $d_n=d_{n_0}$ and $e\in d_{n_0}\ext D_{n_0}$. Therefore, $e$ must be comparable with $\sigma|{m_0}$ and since there is only one such element, the proof is finished.
\end{proof}

\subsection{Broom sets}
We are now ready to define the broom sets and give some of their properties.
\begin{definition}[Broom sets of higher classes]\label{definition: brooms}
We define the hierarchy of broom sets $\mc B_\alpha$, $\alpha < \omega_1$. We set $\mc B_0:=\left\{ \left\{ s \right\} | \ s\in\omega^{<\omega} \right\}$ and, for $1<\alpha<\omega_1$ we define $\mc B_\alpha$ as the union of $\bigcup_{\beta < \alpha}\mc B_\beta$ and the collection of all sets $B\subset \omega^{<\omega}$ of the form
\[ B= \underset {n\in \omega}\bigcup h\hat{\ }f_n\hat{\ }B_n \]
for some finite sequence $h\in\omega^{<\omega}$ (`handle'), broom sets $B_n\in \bigcup_{\beta < \alpha}\mc B_\beta$ (`bristles') and one-to-one sequence $(f_n)_{n\in\omega} \in \baire$ (`forking sequence'). We also denote as $\mc B_{\omega_1}:=\underset {\alpha<\omega_1} \bigcup\mc B_\alpha$ the collection of all these broom classes.
\end{definition}

In Section \ref{section: broom spaces and parametrization}, we will need a result of the type `if $B$ is an element of $\mc B_{\alpha+1}\setminus \mc B_\alpha$, then $B$ cannot be covered by finitely many brooms of class $\alpha$'. While this is, in general, not true, we can restrict ourselves to the following subcollection $\widetilde{\mc B}_{\alpha+1}$ of $\mc B_{\alpha+1}\setminus \mc B_\alpha$ of `$(\alpha+1)$-brooms whose bristles are of the highest allowed class', where the result holds (as we will see later):
\begin{definition} \label{definition: tilde brooms}
We set $\widetilde {\mc B}_0:=\mc B_0$. For successor ordinals we define $\widetilde {\mc B}_{\alpha +1}$ as the collection of all sets $B\subset \omega^{<\omega}$ of the form
\[ B= \underset {n\in \omega}\bigcup h\hat{\ }f_n\hat{\ }B_n, \]
where $h\in\omega^{<\omega}$, $B_n\in \widetilde{\mc B}_\alpha$ and $(f_n)_n$ is the (1-to-1) forking sequence. For a limit ordinal $\alpha<\omega_1$, the family $\widetilde {\mc B}_{\alpha}$ is defined in the same way, except that the bristles $B_n$ belong to $\widetilde {\mc B}_{\beta_n}$ for some $\beta_n<\alpha$, where $\sup_n \beta_n=\alpha$.
\end{definition}

\begin{remark}[Basic properties of broom sets]\label{remark: basic broom properties}
It is clear that for every broom $B\in\mc B_\alpha$, any finite extension of $B$ is also a broom of class $\alpha$, and so is the set $h\ext B$ for any $h\in\omega^{<\omega}$. Moreover, it follows from Proposition \ref{proposition: properties of D} that $\mc B_{\omega_1} \subset \mc D$. These properties also hold for collections $\widetilde {\mc B}_\alpha$, $\alpha<\omega_1$.

Actually, broom sets from Definition \ref{definition: brooms} are exactly those elements of $\mc D$ for which every `branching point' is a `point of infinite branching'. However, we do not need this description of broom sets, so we will not give its proof here.
\end{remark}

Broom sets are connected with rank $r_i$ in the following way:
\begin{proposition}\label{proposition: rank of B}
For $B\in\mc B_{\omega_1}$ and $\alpha<\omega_1$, the following holds:
\begin{enumerate}[(i)]
\item $B\in \mc B_\alpha \implies r_i(B)\leq \alpha$,
\item $B\in \widetilde{\mc B}_\alpha \implies r_i(B) = \alpha$.
\end{enumerate}
\end{proposition}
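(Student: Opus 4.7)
I proceed by transfinite induction on $\alpha$, treating both parts simultaneously and strengthening (2) to: for $B\in\widetilde{\mc B}_\alpha$ the empty sequence $r$ belongs to $D_i^\alpha(\mathrm{cl}_\mathrm{Tr}(B))$ (which already implies $r_i(B)\geq\alpha$). The base case $\alpha=0$ is immediate, since $B=\{s\}$ has a finite tree-closure and $r_i(B)=0$ by convention.

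The inductive step rests on the following key lemma, which I would prove first. For any broom decomposition $B=\bigcup_n h\hat{\ }f_n\hat{\ }B_n$ with $(f_n)$ injective and $B_n\in\mc D$, writing $T:=\mathrm{cl}_\mathrm{Tr}(B)$ and $T_n:=\mathrm{cl}_\mathrm{Tr}(B_n)$, for every ordinal $\beta\geq 1$, every $n\in\omega$, and every $s\in T_n$,
\[
h\hat{\ }f_n\hat{\ }s\in D_i^\beta(T)\iff s\in D_i^\beta(T_n).
\]
I prove this by transfinite induction on $\beta$. The case $\beta=1$ uses that injectivity of $(f_n)$ makes the branches $h\hat{\ }f_n\hat{\ }T_n$ mutually incomparable inside $T$, so the strict extensions of $h\hat{\ }f_n\hat{\ }s$ in $T$ biject via $u\mapsto h\hat{\ }f_n\hat{\ }u$ with those of $s$ in $T_n$. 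The successor step follows by applying the inductive hypothesis to these extensions, and the limit step from $D_i^\lambda=\bigcap_{\beta<\lambda}D_i^\beta$.

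For part (1), the nontrivial case is $B=\bigcup_n h\hat{\ }f_n\hat{\ }B_n\in\mc B_\alpha$ with $B_n\in\mc B_{\beta_n}$, $\beta_n<\alpha$. The inductive hypothesis yields $D_i^{\beta_n+1}(T_n)=\emptyset$, hence $D_i^\alpha(T_n)=\emptyset$ for every $n$. The key lemma then forces $D_i^\alpha(T)\subseteq\{t\in\mc S:t\sqsubseteq h\}$, which is a finite chain, so no element of it has infinitely many extensions. Consequently $D_i^{\alpha+1}(T)=\emptyset$ and $r_i(B)\leq\alpha$.

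For part (2), take $B=\bigcup_n h\hat{\ }f_n\hat{\ }B_n\in\widetilde{\mc B}_\alpha$ and aim to show $r\in D_i^\alpha(T)$. In the successor case $\alpha=\gamma+1$, each $B_n\in\widetilde{\mc B}_\gamma$, so the inductive hypothesis provides $r\in D_i^\gamma(T_n)$; the key lemma yields $h\hat{\ }f_n\in D_i^\gamma(T)$, and injectivity of $(f_n)$ produces infinitely many distinct strict extensions of $r$, so $r\in D_i^{\gamma+1}(T)$. In the limit case, $\sup_n\beta_n=\alpha$ with $B_n\in\widetilde{\mc B}_{\beta_n}$, so for any $\delta<\alpha$ there are infinitely many $n$ with $\beta_n\geq\delta$, and for each such $n$ the inductive hypothesis together with the decreasing nature of $D_i$ gives $r\in D_i^\delta(T_n)$; the same argument then gives $r\in D_i^{\delta+1}(T)$ for every $\delta<\alpha$. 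A secondary transfinite induction on $\gamma\leq\alpha$ extends this to $r\in D_i^\gamma(T)$ for all $\gamma\leq\alpha$, with limit stages handled by intersection, finishing with $r\in D_i^\alpha(T)$. The most delicate point is this final step, together with the limit case of the key lemma: both rely on ensuring that enough distinct branches survive into every intersection stage, which ultimately comes from the injectivity of the forking sequence.
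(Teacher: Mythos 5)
Your proof is correct and follows essentially the same route as the paper's: a transfinite induction in which the derivative is localized along the broom decomposition (your ``key lemma'' is precisely the identity the paper records as its equation for $D_i^{\alpha}(\bigcup_n h\hat{\ }f_n\hat{\ }B_n)$, which you prove more explicitly by a nested induction on $\beta$). Working with the root $r$ instead of the handle $h$ in part (2) is an inessential variation, since each $D_i^\gamma(T)$ is again a tree.
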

\begin{proof}
Firstly, we make the following two observations:
\begin{equation*}
\begin{split}
(\forall B\subset \omega^{<\omega}): r_i(B)=-1 & \iff B=\emptyset, \\
(\forall B \in\mc B_{\omega_1}): r_i(B)=0 & \iff B \in\mc B_0 = \widetilde {\mc B}_0. 
\end{split}
\end{equation*}
We use this as a starting point for transfinite induction. We now prove $(i)$ and $(ii)$ separately, distinguishing also between the cases of successor $\alpha$ and limit $\alpha$.

$(i)$: Suppose that the statement holds for $\alpha<\omega_1$ and let $B\in {\mc B}_{\alpha+1}$. By definition, $B=\bigcup_n h\hat{\ }f_n\hat{\ }B_n$ holds for some handle $h\in\omega^{<\omega}$, forking sequence $(f_n)_n$ and $B_n\in\mc B_\alpha$. For each $n\in\omega$ we have $r_i(B_n)\leq \alpha$, which gives $D_i^{\alpha+1}(B_n)=\emptyset$ and hence $D_i^{\alpha+1}(f_n\hat{\ }B_n)=\emptyset$. It follows that
\[ D_i^{\alpha+1}(\bigcup_n h\hat{\ }f_n\hat{\ }B_n) \subset \mathrm{cl}_\mathrm{Tr}(h), \]
and
\[ D_i^{\alpha+2}(\bigcup_n h\hat{\ }f_n\hat{\ }B_n) = \emptyset ,\]
which, by definition or $r_i$, means that $r_i(B)\leq \alpha+1$.

For the case of limit ordinal $\alpha<\omega_1$, assume that the statement holds for every $\beta<\alpha$, and let $B\in {\mc B}_{\alpha}$. As above, $B=\bigcup_n h\hat{\ }f_n\hat{\ }B_n$ holds for some $h$, $(f_n)_n$ and $B_n$, where $B_n\in\mc B_{\beta_n}$, $\beta_n + 1 <\alpha$. We successively deduce that $r_i(B_n)$ is at most $\beta_n$, hence $D_i^{\alpha}(B_n) \subset D_i^{\beta_n+1}(B_n)$ holds and we have $D_i^{\alpha}(f_n\hat{\ }B_n)=\emptyset$.
Since $(f_n)_n$ is a forking sequence, then for any $s\in f_{n_0}\hat{\ }B_{n_0}$ which is different from the empty sequence, we get the following equivalence:
\begin{equation}\label{eq: r(B)<=alpha}
h\hat{\ }s\in D_i^{\alpha}(\bigcup_n h\hat{\ }f_n\hat{\ }B_n) \iff
 \left( \exists n_0\in \omega \right): s\in D_i^{\alpha}(f_{n_0}\hat{\ }B_{n_0}).
\end{equation}
From \eqref{eq: r(B)<=alpha} we conclude that $D_i^{\alpha}(B)\subset \mathrm{cl}_\mathrm{Tr}(\{h\})$ is finite, which means that $r_i(B)\leq \alpha$.

$(ii)$: Suppose that the second part of the proposition holds for some $\alpha<\omega$ and let $B=\bigcup_n h\hat{\ }f_n\hat{\ }B_n\in\widetilde{\mc B}_{\alpha+1}$. For each $n\in\omega$, $B_n$ is an element of $\widetilde{\mc B}_\alpha$, which by the induction hypothesis means that $r_i(B_n) = \alpha$, and thus there exists some sequence $s_n\in D_i^\alpha(B)$. In particular, we have $h\hat{\ }f_n\hat{\ }s_n\in D_i^\alpha(h\hat{\ }f_n\hat{\ }B_n)$. We conclude that $h\in D_i^{\alpha+1}(B)$ and $r_i(B)\geq \alpha+1$.

Assume now that $\alpha<\omega_1$ is a limit ordinal and that $(ii)$ holds for all $\beta<\alpha$. Let $B=\bigcup_n h\hat{\ }f_n\hat{\ }B_n\in\widetilde{\mc B}_{\alpha}$, where $B_n\in\widetilde{\mc B}_{\beta_n}$, $\sup_n \beta_n=\alpha$. For each $\beta <\alpha$, there is some $n\in\omega$ with $\beta_n\geq \beta$. Since $D_i^{\beta_n}(B_n)\neq\emptyset$, we have
\[ D_i^\beta (B)\supset D_i^\beta(h\hat{\ }f_n\hat{\ }B_n)\supset D_i^{\beta_n}(h\hat{\ }f_n\hat{\ }B_n) \supset h\ext f_n\ext D_i^{\beta_n}(B_n) \ni h\hat{\ }f_n\ext s \sqsupset h \]
for some $s\in\omega^{<\omega}$. This implies that $h\in D_i^\beta(B)$ holds for each $\beta<\alpha$, which gives $h\in \underset{\beta<\alpha} \bigcap D_i^\beta(B) = D_i^\alpha(B)$ and $r_i(B)\geq \alpha$.
\end{proof}

From Proposition \ref{proposition: rank of B}, it follows that no $\widetilde {\mc B}$ set can be covered by finitely many sets from $\bigcup_{\beta<\alpha} \mc B_\beta$. We will prove a slightly stronger result later (Lemma \ref{lemma: E, H and finite unions}).

\section{Broom spaces and their \texorpdfstring{$\mc F_\alpha$}{F-alpha} parametrization}\label{section: broom spaces and parametrization}
We will study a certain collection of one-point Lindelöfications of the discrete uncountable space which give the space a special structure. These spaces were used by (among others) Talagrand, who constructed a space which is non-absolutely $\fsd$ (\cite{talagrand1985choquet}). Our goal is to compute the absolute complexity of spaces of this type.

For an arbitrary system $\mc E\subset \mc P \left( \baire \right)$ which contains only countable sets, we define the space $X_{\mc E}$ as the set $\baire\cup\{\infty\}$ with the following topology: each $\sigma\in\baire$ is an isolated point and a neighborhood subbasis of $\infty$ consists of all sets of the form $\{\infty \}\cup (\baire\setminus E)$ for some $E\in\mc E$.

The outline of Section \ref{section: broom spaces and parametrization} is the following: We note in Example \ref{example: Y(T)} that if $X_{\mc E} =: X \subset cX$, then there is a natural formula for an $\fsd$ set $Y_1\subset cX$ containing $X$. Then in Definition \ref{definition: Y_alpha}, we generalize the formula and define sets $Y_\alpha$. These are useful from two reasons: the first is that their definition is `absolute' (it does not depend on the choice of $cX$). The other is that it is easy to compute (an upper bound on) their complexity, which we do in Lemma \ref{lemma: complexity of Y}.

Our goal is to show that $X_{\mc E}=Y_\alpha$ holds for a suitable $\alpha$. In Section \ref{section: auxiliary}, we prepare the tools for establishing a connection between the complexity of $\mc E$ and the value of `the $\alpha$ that works'. In Proposition \ref{proposition: XA are absolutely K alpha}, we use these results to prove that $X_{\mc E}=Y_\alpha$ holds in every compactification. Applying Lemma \ref{lemma: complexity of Y}, we obtain the absolute complexity of $X$. We finish by giving two corollaries of this proposition.

Rather than working with a general $\mc E\subset \mc P(\baire)$, we will be interested in the following systems:

\begin{definition}[Infinite broom sets\footnote{In \cite{talagrand1985choquet} these systems are denoted as $\mc A_\alpha$, but we have to use a different letter to distinguish between infinite brooms and additive Borel classes. The letter $\mc E$ is supposed to stand for ``extension" (of $\mc B$). Note also that Talagrand's collections $\mc A_n$, $n\in\omega$, correspond to our $\mc E_{n+1}$ (for $\alpha\geq \omega$ the enumeration is the same).}]\label{definition: infinite brooms}
For $\alpha\leq\omega_1$ we define
\[ \mc E_\alpha:=\left\{ E\subset \baire | \ E \textrm { is an infinite extension of some } B\in\mc B_\alpha \right\}.\footnote{Where $\mc B_\alpha$ is the $\alpha$-th collection of broom sets, introduced in Definition \ref{definition: brooms}.} \]
\end{definition}
Since $\mc B_{\omega_1}\subset \mc D$, every $E\in \mc E_{\omega_1}$ is closed and discrete in $\baire$, and the following result holds:

\begin{proposition}[\cite{talagrand1985choquet}]
If each $E\in\mc E$ is closed discrete in $\baire$, then $X_{\mc E}\in\fsd\left(\beta X_{\mc E}\right)$.
\end{proposition}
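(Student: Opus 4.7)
The plan is to realise $X_{\mc E}$ as an explicit $\fsd$-set in $\beta X_{\mc E}$ using the tree structure of $\baire$. For each $s \in \mc S$ set $A_s := \{\sigma\in\baire : \sigma\sqsupset s\}$, and let
\[ B_s := \overline{A_s \cup \{\infty\}}^{\beta X_{\mc E}}. \]
Each $A_s \cup \{\infty\}$ is closed in $X_{\mc E}$, because its complement $\bigsqcup_{t \in \omega^{|s|},\, t \neq s} A_t$ is a union of isolated points and hence open. Thus each $B_s$ is closed in $\beta X_{\mc E}$, and I shall prove
\[ X_{\mc E} \;=\; \bigcap_{m \in \omega}\, \bigcup_{s \in \omega^m} B_s. \]
Since for each $m$ the inner union is a countable (indexed by $\omega^m$) union of closed sets, the right-hand side is $\fsd$ in $\beta X_{\mc E}$, which is what we want.

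The inclusion $\subseteq$ is immediate: for every $m$, the partition $\baire = \bigsqcup_{s \in \omega^m} A_s$ places every $\sigma \in \baire$ in some $B_s$, while $\infty$ belongs to every $B_s$ because every basic neighbourhood $\{\infty\} \cup (\baire\setminus E)$ of $\infty$ meets the uncountable $A_s$ (as $E \in \mc E$ is countable). For the reverse inclusion, suppose $x \in \bigcap_m \bigcup_{s\in\omega^m} B_s \setminus X_{\mc E}$. Since $x \neq \infty$, Hausdorffness of $\beta X_{\mc E}$ gives a continuous $f : \beta X_{\mc E} \to [0,1]$ with $f(x) = 0$ and $f(\infty) = 1$. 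Continuity of $f|_{X_{\mc E}}$ at $\infty$ forces $\{\sigma\in\baire : f(\sigma) < 1/2\} \subseteq E$ for some $E \in \mc E$; since $f^{-1}\bigl([0,1/2)\bigr)$ is open in $\beta X_{\mc E}$ and $X_{\mc E}$ is dense there, we deduce $x \in \overline{E}^{\beta X_{\mc E}}$.

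The key computation uses that every $E \in \mc E$ is clopen in $X_{\mc E}$ (its complement is the basic open $\{\infty\}\cup(\baire\setminus E)$). Decomposing $A_s \cup \{\infty\} = (A_s\cap E) \sqcup (A_s\setminus E) \sqcup \{\infty\}$ and noting that the latter two pieces are completely separated from $E$ in $X_{\mc E}$ by $\chi_E$, one obtains
\[ B_s \cap \overline{E}^{\beta X_{\mc E}} \;=\; \overline{A_s \cap E}^{\beta X_{\mc E}}, \]
and $A_s \cap E$ is itself clopen in $X_{\mc E}$ (it is a subset of the clopen $E$ consisting of isolated points), so its closure in $\beta X_{\mc E}$ is clopen. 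Choose, for each $m$, an $s_m \in \omega^m$ with $x \in B_{s_m}$. Then $x$ lies in the clopen $\overline{A_{s_m} \cap E}^{\beta X_{\mc E}}$. If some $s_m, s_{m'}$ with $m < m'$ were incomparable, the clopens $A_{s_m}\cap E$ and $A_{s_{m'}}\cap E$ would be disjoint in $X_{\mc E}$, hence their $\beta X_{\mc E}$-closures would be disjoint clopens --- impossible, as $x$ lies in both. Therefore the $s_m$'s are $\sqsubset$-linearly ordered and combine to an infinite branch $\sigma \in \baire$. Finally, closed-discreteness of $E$ in $\baire$ yields some $k$ for which $A_{\sigma|k} \cap E$ is finite (equal to $\{\sigma\}$ or $\emptyset$ depending on whether $\sigma \in E$), so $\overline{A_{\sigma|k}\cap E}^{\beta X_{\mc E}} = A_{\sigma|k}\cap E \subseteq X_{\mc E}$ and $x \in X_{\mc E}$, contradicting the choice of $x$.

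The hard inclusion is the main obstacle, and inside it the pivotal move is extracting a branch $\sigma$ from $(s_m)_m$. The two hypotheses on $E$ enter complementarily: clopenness in $X_{\mc E}$ ensures that disjoint pieces produce disjoint $\beta X_{\mc E}$-closures (and so forces $\sqsubset$-compatibility of the $s_m$'s), while closed-discreteness in $\baire$ collapses the resulting infinite branch into a finite subset of $X_{\mc E}$.
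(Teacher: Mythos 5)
Your proof is correct and is essentially the canonical argument the paper itself alludes to: your set $\bigcap_m\bigcup_{s\in\omega^m}B_s$ is the set $Y_1$ of Example \ref{example: Y(T)}, and your branch-extraction step is the content of Lemma \ref{lemma: Y(T) - IF case} (the paper only cites Talagrand for this proposition rather than proving it). One pedantic repair: the sets $\{\infty\}\cup(\baire\setminus E)$ form only a subbasis at $\infty$, so continuity of $f$ at $\infty$ gives $\{\sigma: f(\sigma)<1/2\}\subseteq E_1\cup\dots\cup E_k$ for finitely many $E_i\in\mc E$; since a finite union of countable closed discrete sets is again countable, closed, discrete, and clopen in $X_{\mc E}$, the rest of your argument goes through verbatim with $E$ replaced by this union.
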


In the remainder of Section \ref{section: broom spaces and parametrization}, $X$ will stand for the space $X_{\mc E}$ for some family $\mc E \subset \mc E_{\omega_1}$ and $cX$ will be a fixed compactification of $X$. All closures will automatically be taken in $cX$. The choice of $\mc E$ is important, because it gives the space $X$ the following property:

\begin{lemma}\label{lemma: x in cX -- X and H}
For every $x\in cX \setminus X$ there exists $H\subset X$ satisfying
\begin{enumerate}[(i)]
\item $H$ can be covered by finite union of elements of $\mc E$;
\item $\left( \forall F\subset X \right) : x\in\overline{F} \implies x\in \overline{F\cap H}$.
\end{enumerate}
\end{lemma}
\begin{proof}
Since $x\neq\infty$, there is some $U\in\mc U(x)$ with $\infty \notin \overline{U}$. Because $U$ is a neighborhood of $x$, we get
\[ \left( \forall F\subset X \right) : x\in\overline{F} \implies x\in\overline{F\cap U}  .\]
Therefore, we define $H$ as $H:=U\cap X$. Clearly, $H$ satisfies the second condition. Moreover, $(i)$ is also satisfied, because by definition of $X_\mc E$, sets of the form $\{\infty\}\cup (\baire \setminus E)$, $E\in\mc E$ form a neighborhood subbasis of $\infty$.
\end{proof}

\subsection{\texorpdfstring{$\mc F$}{F}-Borel sets \texorpdfstring{$Y_\alpha$}{Y-alpha} containing \texorpdfstring{$X_{\mc E}$}{X(E)}}
First, we start with motivation for our definitions:
\begin{example}[$\fsd$ and $F_{\sigma\delta\sigma\delta}$ sets containing $X_{\mc E}$]\label{example: Y(T)}
When trying to compute the complexity of $X$ in $cX$, the following canonical $K_{\sigma\delta}$ candidate comes to mind\footnote{Recall that the notions related to sequences are defined in Notation \ref{notation: sequences}. In particular, $\mc N(s)$ denotes the standard Baire interval.}
\[ X \subset \{\infty\} \cup \underset {n\in\omega} \bigcap \underset {s\in \omega^n} \bigcup \overline{\mc N(s)}=:Y_1. \]
The two sets might not necessarily be identical, but the inclusion above always holds. At the cost of increasing the complexity of the right hand side to $F_{\sigma\delta\sigma\delta}$, we can define a smaller set, which will still contain $X$:
\[ X \subset \{\infty\} \cup \underset {n\in\omega} \bigcap \  \underset {s\in \omega^{n}} \bigcup \ \underset {k\in\omega} \bigcap \ \underset {t\in \omega^k} \bigcup \ \overline{\mc N(s\hat{\ }t)}=:Y_2. \]
\end{example}
In the same manner as in Example \ref{example: Y(T)}, we could define sets $Y_1\supset Y_2 \supset Y_3 \supset \dots \supset X$. Unfortunately, the notation from Example \ref{example: Y(T)} is impractical if we continue any further, and moreover, it is unclear how to extend the definition to infinite ordinals. We solve this problem by introducing `admissible mappings' and a more general definition below.

\begin{definition}[Admissible mappings]\label{definition: admissible mapping}
Let $T\in\textrm{Tr}$ be a tree and $\varphi : T\rightarrow \omega^{<\omega}$ a mapping from $T$ to the space of finite sequences on $\omega$. We will say that $\varphi$ is \emph{admissible}, if it satisfies
\begin{enumerate}[(i)]
\item $(\forall s,t\in T): s \sqsubset t \implies \varphi (s) \sqsubset \varphi(t)$
\item $(\forall t\in T): \left| \varphi (t) \right| = t(0)+t(1)+\dots+t(|t|-1)$.
\end{enumerate}
For $S\subset T$ we denote by $\widetilde{\varphi}(S):=\mathrm{cl}_\mathrm{Tr}(\varphi (S))$ the tree generated by $\varphi (S)$.
\end{definition}

\begin{notation} \label{notation: trees of height alpha}
For each limit ordinal $\alpha<\omega_1$ we fix a bijection $\pi_\alpha : \omega \rightarrow \alpha$. If $\alpha=\beta+1$ is a successor ordinal, we set $\pi_\alpha(n):=\beta$ for each $n\in\omega$. For $\alpha=0$, we define $T_0:=\{\emptyset\}$ to be the tree which only contains the root. For $\alpha\geq 1$, we define $T_\alpha$ (`maximal trees of height $\alpha$') as
\[ T_\alpha := \{\emptyset\} \cup \underset {n\in\omega} \bigcup n\hat{\ }T_{\pi_\alpha (n)}. \]
\end{notation}

\begin{definition} \label{definition: Y_alpha}
For $\alpha<\omega_1$ we define
\[ Y_\alpha:=\left\{ x\in cX | \ \left(\exists \varphi : T_\alpha\rightarrow \omega^{<\omega} \textrm{ adm.} \right)\left( \forall t\in T_\alpha \right) : x\in \overline {\mc N \left( \varphi (t)\right)} \right\}\cup \left\{ \infty \right\} .\]
\end{definition}
For $\alpha=1$, it is not hard to see that this definition coincides with the one given in Example \ref{example: Y(T)}. This follows from the equivalence
\begin{equation*}
\begin{split}
x \in \underset {n\in\omega} \bigcap \underset {s\in \omega^n} \bigcup \overline{\mc N(s)} \iff & (\forall n\in\omega)(\exists s_n\in \omega^{<\omega},\ |s_n|=n): x\in\overline { \mc N(s_n) } \\
\overset{(\star)} \iff & (\exists \varphi : T_1 \rightarrow \omega^{<\omega} \textrm{ adm.})(\forall t\in T_1) :x\in\overline{\mc N\left(\varphi(t)\right)} \\
\overset{\textrm{def}} \iff & x\in Y_1,
\end{split}
\end{equation*}
where $(\star)$ holds because the formula $\varphi(\emptyset):=\emptyset$, $\varphi((n)):=s_n$ defines an admissible mapping on $T_1=\{\emptyset\}\cup\{ (n)| \ n\in\omega \}$ and also any admissible mapping on $T_1$ is defined in this way for some sequences  $s_n$, $n\in\omega$. The definition also coincides with the one given above for $\alpha=2$ -- this follows from the proof of Lemma \ref{lemma: complexity of Y}. We now state the main properties of sets $Y_\alpha$.

\begin{lemma}
$Y_\alpha\supset X$ holds for every $\alpha<\omega_1$.
\end{lemma}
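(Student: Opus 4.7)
The plan is essentially a direct construction: the point $\infty$ lies in $Y_\alpha$ by definition, so I only need to exhibit, for each $\sigma \in \baire$, an admissible mapping $\varphi_\sigma : T_\alpha \to \mc S$ whose values are all initial segments of $\sigma$. This will immediately put $\sigma$ in every basic neighborhood $\mc N(\varphi_\sigma(t))$, hence in every closure $\overline{\mc N(\varphi_\sigma(t))}$, and thus in $Y_\alpha$.

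Concretely, for $t \in T_\alpha$ set
\[ n(t) := t(0) + t(1) + \dots + t(|t|-1) \]
(with $n(r)=0$ for the empty sequence), and define
\[ \varphi_\sigma(t) := \sigma|_{n(t)} . \]
I would then verify the two defining conditions of admissibility. Condition 2 is immediate from the definition, since $|\varphi_\sigma(t)| = n(t)$. For condition 1, if $s \sqsubset t$ then $n(s) \leq n(t)$ (the sum defining $n(t)$ extends the one defining $n(s)$ by nonnegative integers), so $\varphi_\sigma(s) = \sigma|_{n(s)}$ is an initial segment of $\varphi_\sigma(t) = \sigma|_{n(t)}$, i.e.\ $\varphi_\sigma(s) \sqsubset \varphi_\sigma(t)$.

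Having established admissibility, note that for each $t \in T_\alpha$ we have $\varphi_\sigma(t) \sqsubset \sigma$, so $\sigma \in \mc N(\varphi_\sigma(t)) \subset \overline{\mc N(\varphi_\sigma(t))}$. By the definition of $Y_\alpha$, this proves $\sigma \in Y_\alpha$, and combined with $\infty \in Y_\alpha$ we obtain $X \subset Y_\alpha$.

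There is no real obstacle here — the only mild subtlety is making sure the construction is uniform across all $\alpha < \omega_1$ (which it is, since the formula for $\varphi_\sigma$ depends only on $t$ and not on the ambient tree $T_\alpha$), and remembering that the empty sum correctly handles the root $r \in T_\alpha$ so that $\varphi_\sigma(r) = r$.
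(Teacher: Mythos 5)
Your construction is exactly the paper's: it defines $\varphi(t):=\sigma|_{t(0)+\dots+t(|t|-1)}$ and observes that this admissible mapping witnesses $\sigma\in Y_\alpha$. Your write-up just spells out the verification of admissibility in more detail; there is nothing to correct.
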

\begin{proof}
Let $\sigma\in \baire$. For $t\in T_\alpha$ we set $\varphi(t):=\sigma|_{t(0)+\dots+t(|t|-1)}$, and observe that this is an admissible mapping which witnesses that $\sigma\in Y_\alpha$.
\end{proof}

\begin{lemma}[Complexity of $Y_\alpha$]\label{lemma: complexity of Y}
For any $\alpha=\lambda+m<\omega_1$ (where $m\in\omega$ and $\lambda$ is either zero or a limit ordinal), $Y_\alpha$ belongs to $\mc F_{\lambda + 2m+1}(cX)$.
\end{lemma}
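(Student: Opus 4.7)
The plan is to strengthen the statement by introducing ``shifted'' variants of $Y_\alpha$ parametrised by a base sequence $s \in \mc S$, and then run a transfinite induction on $\alpha$. For $\alpha < \omega_1$ and $s \in \mc S$, set
\[
Y_\alpha(s) := \bigl\{ x \in cX : \exists\, \varphi : T_\alpha \to \mc S \text{ admissible}, \ \forall t \in T_\alpha, \ x \in \overline{\mc N(s \hat{\ } \varphi(t))} \bigr\} \cup \{\infty\},
\]
so that $Y_\alpha(r) = Y_\alpha$. I would prove, uniformly in $s$, that $Y_\alpha(s) \in \mc F_{\lambda + 2m + 1}(cX)$ whenever $\alpha = \lambda + m$. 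The base case $\alpha = 0$ is immediate: $T_0 = \{r\}$ forces $\varphi(r) = r$, whence $Y_0(s) = \overline{\mc N(s)} \in \mc F_1(cX)$.

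\textbf{Key recursion.} The heart of the argument is the identity
\[
Y_\alpha(s) = \{\infty\} \cup \bigcap_{n \in \omega} \bigcup_{u \in \omega^n} Y_{\pi_\alpha(n)}(s \hat{\ } u), \qquad \alpha \geq 1,
\]
which I would derive by unpacking $T_\alpha = \{r\} \cup \bigcup_n n \hat{\ } T_{\pi_\alpha(n)}$. Every admissible $\varphi : T_\alpha \to \mc S$ is uniquely determined by the values $u_n := \varphi((n)) \in \omega^n$ (the length condition forces $|u_n| = n$) together with, for each $n$, an admissible map $\psi_n : T_{\pi_\alpha(n)} \to \mc S$ satisfying $\varphi(n \hat{\ } t) = u_n \hat{\ } \psi_n(t)$. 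The admissibility length condition on $\varphi$ translates exactly into the admissibility of each $\psi_n$. Consequently, for $x \neq \infty$, the condition $x \in \bigcap_{t \in T_\alpha} \overline{\mc N(s \hat{\ } \varphi(t))}$ decouples per $n$, each clause reading ``there exist $u_n \in \omega^n$ and admissible $\psi_n$ with $x \in \overline{\mc N(s \hat{\ } u_n \hat{\ } \psi_n(t))}$ for all $t$'', which is precisely $x \in \bigcup_{u \in \omega^n} Y_{\pi_\alpha(n)}(s \hat{\ } u)$.

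\textbf{Induction.} With the recursion in place, the rest is bookkeeping inside the $\mc F$-Borel hierarchy, using that odd classes are closed under countable intersections and even classes under countable unions. In the successor step $\alpha + 1 = \lambda + (m+1)$, the induction hypothesis places each $Y_\alpha(s \hat{\ } u)$ in the odd class $\mc F_{\lambda + 2m + 1}$; the countable union over $u \in \omega^n$ lifts this to $\mc F_{\lambda + 2m + 2}$ (even), and the countable intersection over $n$ lands in $\mc F_{\lambda + 2m + 3} = \mc F_{\lambda + 2(m+1) + 1}$, as needed. In the limit step $\alpha = \lambda$, each $\pi_\lambda(n)$ decomposes as $\lambda_n + m_n$ with $\beta_n := \lambda_n + 2m_n + 1 < \lambda$, so the induction hypothesis gives $Y_{\pi_\lambda(n)}(s \hat{\ } u) \in \mc F_{\beta_n}$; each inner union lies in $\mc F_{\beta_n + 1} \subseteq \mc F_\lambda$, and since $\lambda$ has ``even'' parity in the hierarchy, the outer countable intersection places $Y_\lambda(s)$ in $\mc F_{\lambda + 1} = \mc F_{\lambda + 2 \cdot 0 + 1}$. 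Throughout, adjoining the closed point $\{\infty\}$ never raises the class.

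\textbf{Main obstacle.} The only non-routine point is the recursion identity, specifically the verification that the decomposition $\varphi(n \hat{\ } t) = u_n \hat{\ } \psi_n(t)$ is well-defined and that $|\varphi(n \hat{\ } t)| = n + t(0) + \dots + t(|t|-1)$ forces $|\psi_n(t)| = t(0) + \dots + t(|t|-1)$, giving admissibility of $\psi_n$ on the subtree $T_{\pi_\alpha(n)}$. Once this matching of length conditions is confirmed and the independence of the choices $(u_n, \psi_n)$ across different $n$ is spelled out, the inductive class arithmetic above closes the proof.
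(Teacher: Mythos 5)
Your proposal is correct and follows essentially the same route as the paper: the paper likewise introduces handle-parametrised sets $Y^h_\alpha$ (your $Y_\alpha(s)$), proves the identical recursion $Y^h_\alpha=\bigcap_{n}\bigcup_{s_n\in\omega^n}Y^{h\hat{\ }s_n}_{\pi_\alpha(n)}$ by decomposing admissible mappings exactly as you describe, and then does the same $\sigma\delta$ bookkeeping in the successor and limit steps. No substantive differences.
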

\begin{proof}
For each $h\in\omega^{<\omega}$ and $\alpha<\omega_1$, we will show that the set $Y^h_\alpha$ defined as
\[ Y^h_\alpha:=\left\{ x\in cX| \ (\exists \varphi : T_\alpha \rightarrow \omega^{<\omega} \textrm{ adm.})(\forall t\in T_\alpha):x\in\overline{\mc N(h\hat{\ }\varphi(t)} \right\} \]
belongs to $\mc F_{\lambda + 2m+1}(cX)$.
The only admissible mapping from $T_0$ is the mapping $\varphi:\emptyset\mapsto \emptyset$, which means that $Y^h_\alpha=\overline{\mc N(h)}\in \mc F_1$. In particular, the claim holds for $\alpha=0$.

Let $1\leq\alpha<\omega_1$. First, we prove the following series of equivalences
\begin{equation*}
\begin{split}
x \in Y^h_\alpha \overset{(a)} \iff & (\exists \varphi : T_\alpha \rightarrow \omega^{<\omega} \textrm{ adm.})(\forall t\in T_\alpha): \\
	& x\in \overline{\mc N \left( h\hat{\ }\varphi(t)\right) } \\
\overset{(b)} \iff & (\exists \varphi : T_\alpha \rightarrow \omega^{<\omega} \textrm{ adm.})(\forall n\in\omega)(\forall t\in T_{\pi_\alpha(n)}): \\
	& x\in\overline{\mc N\left(h\hat{\ }\varphi(n\hat{\ }t)\right)}  \\
\overset{(c)} \iff & (\forall n\in\omega)(\exists s_n\in\omega^ n)(\exists \varphi_n : T_{\pi_\alpha(n)} \rightarrow \omega^{<\omega} \textrm{ adm.})(\forall t\in T_{\pi_\alpha(n)}): \\
	& x\in\overline{\mc N\left(h\hat{\ }s_n\hat{\ }\varphi_n(t)\right)} \\
\overset{(d)} \iff & x\in\bigcap_{n\in\omega}\bigcup_{s_n\in\omega^n}Y^{h\hat{\ }s_n}_{\pi_\alpha(n)},
\end{split}
\end{equation*}
from which the equation \eqref{eq: Y_alpha} follows:
\begin{equation}\label{eq: Y_alpha}
Y^h_\alpha=\bigcap_{n\in\omega}\bigcup_{s_n\in\omega^n}Y^{h\hat{\ }s_n}_{\pi_\alpha(n)}.
\end{equation}
The equivalences $(a)$ and $(d)$ are simply the definition of $Y^h_\alpha$, and $(b)$ follows from the definition of trees $T_\alpha$. The nontrivial part is $(c)$. To prove the implication `$\implies$', observe that if $\varphi$ is admissible, then
\[ (\forall n\in\omega)(\forall t\in T_{\pi_\alpha(n)})(\exists \varphi_n(t)\in \omega^{<\omega}): \varphi(n\ext t)=\varphi\left((n)\right)\ext \varphi_n(t),  \]
the mappings $\varphi_n:T_{\pi_\alpha(n)}\rightarrow \omega^{<\omega}$ defined by this formula are admissible and $|\varphi((n))|=n$. The implication from right to left follows from the fact that whenever $\varphi_n:T_{\pi_\alpha(n)}\rightarrow \omega^{<\omega}$ are admissible mappings and $s_n\in\omega^n$, the mapping $\varphi$ defined by formula $\varphi(\emptyset):=\emptyset$, $\varphi(n\ext t):=s_n\ext \varphi_n(t)$ is admissible as well.

We now finish the proof. If $\alpha$ is a successor ordinal, then $\pi_\alpha(n)=\alpha-1$ holds for all $n\in\omega$. Therefore, we can rewrite \eqref{eq: Y_alpha} as
\begin{equation*}
Y^h_\alpha = \bigcap_{n\in\omega}\bigcup_{s_n\in\omega^n}Y^{h\hat{\ }s_n}_{\pi_\alpha(n)} 
= \bigcap_{n\in\omega}\bigcup_{s_n\in\omega^n}Y^{h\hat{\ }s_n}_{\alpha-1}
\end{equation*}
and observe that each `successor' step increases the complexity by $(\cdot )_{\sigma\delta}$. Lastly, assume that $\alpha$ is a limit ordinal and $Y^s_{\alpha'}\in \underset {\beta<\alpha} \bigcup \mc F_\beta$ holds for all $s\in\omega^{<\omega}$ and $\alpha'<\alpha$. Since for limit $\alpha$, we have $\pi_\alpha(n)<\alpha$ for each $n\in\omega$, \eqref{eq: Y_alpha} gives
\begin{equation*}
Y^h_\alpha = \bigcap_{n\in\omega}\bigcup_{s_n\in\omega^n}Y^{h\hat{\ }s_n}_{\pi_\alpha(n)} 
\in \left( \underset {\beta<\alpha} \bigcup \mc F_\beta  \right)_{\sigma\delta}=\mc F_{\alpha+1},
\end{equation*}
which is what we wanted to prove.
\end{proof}

\subsection{Some auxiliary results} \label{section: auxiliary}
In this section, we give a few tools which will be required to obtain our main result. Later, we will need to show that for a suitable $\alpha$, $Y_\alpha \setminus X$ is empty. In order to do this, we first explore some properties of those $x\in Y_\alpha$ which are in $X$, and of those $x\in Y_\alpha$ which do not belong to $X$. We address these two possibilities separately in Lemma \ref{lemma: Y(T) -- IF case} and Lemma \ref{lemma: Y(T) -- WF case} and show that they are related with the properties of the corresponding admissible mappings\footnote{By such a mapping we mean an admissible mapping which witnesses that $x\in Y_\alpha$ (for details, see Definitions \ref{definition: admissible mapping} and \ref{definition: Y_alpha}).}.
\begin{lemma}[The non-WF case]\label{lemma: Y(T) -- IF case}
For any $\sigma\in\baire$ and any increasing sequence of integers $(n_k)_{k\in\omega}$, we have $\bigcap_k\overline{\mc N(\sigma|n_k)}^{cX}=\{\sigma\}$. In particular, we have
\[ \bigcap_{k\in\omega} \overline{\mc N(\sigma|n_k)}^{cX}\subset X . \]
\end{lemma}
\begin{proof}
Suppose that $\sigma$ and $n_k$ are as above and the intersection $\bigcap_k \overline{\mc N(\sigma|n_k)}$ contains some $x\ne \sigma$. Clearly, we have $x\in cX\setminus X$. By Lemma \ref{lemma: x in cX -- X and H}, there is some discrete (in $\baire$) set $H\subset \baire$, such that for each $k\in\omega$, $x$ belongs to $\overline{\mc N(\sigma|n_k)\cap H}$. Since the sets $\mc N(\sigma|n_k)$ form a neighborhood basis of $\sigma$, there exists $n_0\in\omega$, such that $\mc N(s_{n_0})\cap H$ is a singleton. In particular, this means that $\mc N(s_{n_0})\cap H$ is closed in $cX$, and we have $x\in \overline{\mc N(s_{n_0})\cap H} = \mc N(s_{n_0})\cap H \subset H \subset X$ -- a contradiction.
\end{proof}

On the other hand, Lemma \ref{lemma: Y(T) -- IF case} shows that if the assumptions of Lemma \ref{lemma: Y(T) -- IF case} are not satisfied, we can find a broom set\footnote{\label{footnote: tilde B}Recall that the collections $\widetilde{B}_\alpha$ were introduced in Definition \ref{definition: tilde brooms}.} of class $\alpha$ which corresponds to $x$. We remark that Lemma \ref{lemma: Y(T) -- WF case} is a key step on the way to Proposition \ref{proposition: XA are absolutely K alpha} -- in particular, it is the step in which the admissible mappings are an extremely useful tool.
\begin{lemma}[Well founded case]\label{lemma: Y(T) -- WF case}
Let $x\in Y_\alpha$ and  suppose that $\varphi$ is any admissible mapping witnessing this fact. If $\widetilde{\varphi} (T_\alpha)\in\textrm{WF}$, then there exists $B \in \widetilde{ \mc B} _\alpha$ with $B\subset \widetilde{\varphi} (T_\alpha)$.
\end{lemma}
\begin{proof}
We prove the statement by induction over $\alpha$. For $\alpha=0$ we have $T_0=\{\emptyset\}$ and $\varphi(\emptyset)\in\omega^{<\omega} = \widetilde{\mc B}_0$.

Consider $\alpha > 0$ and assume that the statement holds for every $\beta < \alpha$. We have $\{(n)|\ n\in \omega\} \subset T_{\alpha}$. By the second defining property of admissible mappings, we get $\varphi((m))\neq \varphi((n))$ for distinct $m,n\in\omega$, which means that the tree $T:=\widetilde{\varphi} (\{(n)| \ n\in\omega\})$ is infinite. Since $T \subset \widetilde{\varphi} (T_{\alpha}) \in \textrm{WF}$, we use König's lemma to deduce that $T$ contains some $h\in\omega^{<\omega}$ with infinite set of successors $S:=\textrm{succ}_T(h)$.

For each $s\in S$ we choose one $n_s\in\omega$ with $\varphi(n_s)\sqsupset s$. In the previous paragraph, we have observed that $\left(s(|h|)\right)_{s\in S}$ is a forking sequence. From the first property of admissible mappings, we get
\begin{equation} \label{eq: WF case}
(\forall s\in S)(\forall t \in T_{\alpha} \textrm{ s.t. } t(0)=n_s): \varphi(t)\sqsupset s.
\end{equation}
In particular, for each $s\in S$ we have
\[ \varphi ( \{ t\in T_{\alpha}| \ t(0)=n_s \} ) = s\hat{\ }S_s \textrm{ for some }S_s\subset \omega^{<\omega}. \]
By definition\footnote{Recall that $T_\alpha$ is an $\omega$-ary tree of height $\alpha$, defined in Notation \ref{notation: trees of height alpha}. } of $T_{\alpha}$, we have $\{ t\in \omega^{<\omega}| \ n_s\hat{\ }t\in T_{\alpha} \}=T_{\pi_\alpha (n_s)}$. By \eqref{eq: WF case}, each $\varphi (n_s \hat{\ } t)$ for $t\in T_{\pi_\alpha (n_s)}$ is of the form $\varphi(n_s\hat{\ }t)=\varphi(n_s)\hat{\ }\varphi_s(t)$ for some $\varphi_s(t)\in\omega^{<\omega}$, and, since $\varphi$ is admissible, the mapping $\varphi_s(t):T_{\pi_\alpha (n_s)}\rightarrow\omega^{<\omega}$ defined by this formula is admissible as well.

It follows from the induction hypothesis that $\mathrm{cl}_\mathrm{Tr}\left( \varphi_s (T_{\pi_\alpha (n)})  \right)$ contains some $\widetilde{\mc B}_{\pi_\alpha (n)}$-set $D_s$. Let $C_s \subset \varphi_s (T_{\pi_\alpha (n)})$ be some finite extension of $D_s$. By Remark \ref{remark: basic broom properties}, $C_s$ belongs to $\widetilde{\mc B}_{\pi_\alpha (n)}$. Finally, since
\[ s\ext S_s=\varphi(n_s)\hat{\ }\varphi_s(T_{\pi_\alpha(n_s)})\supset \varphi(n_s)\ext C_s \in \widetilde{\mc B}_{\pi_\alpha (n)} , \]
there also exists some $B_s\subset S_s$ with $B_s\in \widetilde{\mc B}_{\pi_\alpha (n)}$ (the set $B_s$ can have the same bristles and forking sequence as $C_s$, but $s$ might be shorter than $\varphi(n_s)$, so $B_s$ might have a longer handle than $C_s$). We finish the proof by observing that
\begin{equation}\label{eq: Phi contains Balpha}
\widetilde{\varphi} (T_\alpha) \supset \varphi \left( \underset{s\in S} \bigcup \left\{ t\in T_{\alpha}| \ t(0)=n_s \right\} \right) \supset \underset{s\in S} \bigcup s\hat{\ }S_s \supset \underset{s\in S} \bigcup h\hat{\ }s(|h|)\hat{\ }B_s.
\end{equation}
Because the set $S$ is infinite, the rightmost set in \eqref{eq: Phi contains Balpha} is, by definition, an element of $\widetilde{ \mc B} _\alpha$.
\end{proof}

We will eventually want to show that the assumptions of Lemma \ref{lemma: Y(T) -- WF case} can only be satisfied if the family $\mc E$ is sufficiently rich. To this end, we need to extend Lemma \ref{lemma: rank and finite covers} to a situation where a set $H\subset\baire$ cannot be covered by finitely many `infinite broom sets'\footnote{That is, the infinite extensions of broom sets, introduced in Definition \ref{definition: infinite brooms}.}:
\begin{lemma}\label{lemma: E, H and finite unions}
Suppose that a set $H\subset \baire$ contains an infinite extension of some $B\subset \omega^{<\omega}$ which satisfies $r_i(B)\geq \alpha$.\footnote{Recall that $r_i$ denotes the `infinite branching rank' introduced in Definition \ref{definition: derivative}.} Then $H$ cannot be covered by finitely many elements of $\bigcup_{\beta<\alpha} \mc E_\beta$.
\end{lemma}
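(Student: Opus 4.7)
The plan is to argue by contradiction. Suppose $H \subseteq E_1 \cup \dots \cup E_k$ with each $E_j \in \mc E_{\beta_j}$ and $\beta_j < \alpha$; by definition each $E_j$ is an infinite extension of some $C_j \in \mc B_{\beta_j}$ via a bijection $\psi_j : C_j \to E_j$ satisfying $\psi_j(c) \sqsupset c$. Let $A \subseteq H$ denote the infinite extension of $B$ furnished by a bijection $\varphi : B \to A$, and partition $A = A_1 \cup \dots \cup A_k$ disjointly with $A_j \subseteq E_j$. Set $B_j := \varphi^{-1}(A_j)$ and $C'_j := \psi_j^{-1}(A_j) \subseteq C_j$. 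The identity $r_i(S_1 \cup \dots \cup S_k) = \max_j r_i(S_j)$ established inside the proof of Lemma \ref{lemma: rank and finite covers}, combined with the hypothesis $r_i(B) \geq \alpha > \max_j \beta_j$, forces some index $j$ with $r_i(B_j) \geq \alpha > \beta_j$. Since $r_i(C_j) \leq \beta_j$ by Proposition \ref{proposition: rank of B}, it suffices to prove $r_i(B_j) \leq r_i(C_j)$.

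The composition $\rho := \psi_j^{-1} \circ \varphi|_{B_j} : B_j \to C'_j$ is a bijection, and for every $s \in B_j$ both $s$ and $\rho(s)$ are initial segments of $\varphi(s) = \psi_j(\rho(s))$, so they are comparable. Split $B_j = B_j^{(1)} \cup B_j^{(2)}$ according to whether $s$ is an initial segment of $\rho(s)$, or $\rho(s)$ is a strictly shorter initial segment of $s$. In the first case $B_j^{(1)} \subseteq \mathrm{cl}_\mathrm{Tr}(C'_j)$, so monotonicity of $D_i$ on nested trees yields $r_i(B_j^{(1)}) \leq r_i(C'_j) \leq r_i(C_j)$ immediately.

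The substantial step is $B_j^{(2)}$. Set $C''_j := \rho(B_j^{(2)}) \subseteq C_j$ and note that its elements are pairwise incomparable, since $C_j \in \mc D$ by Remark \ref{remark: basic broom properties}. I claim $D_i(B_j^{(2)}) \subseteq D_i(C''_j)$; once this is established, iterated monotonicity yields $D_i^{\beta+1}(B_j^{(2)}) \subseteq D_i^{\beta+1}(C''_j)$ for every $\beta$, hence $r_i(B_j^{(2)}) \leq r_i(C''_j) \leq r_i(C_j)$. To verify the inclusion, fix $u \in D_i(B_j^{(2)})$. Each proper extension $u'$ of $u$ in $\mathrm{cl}_\mathrm{Tr}(B_j^{(2)})$ is an initial segment of some $s = \rho^{-1}(c)$ with $c \in C''_j$ and $c$ a strict initial segment of $s$; since $u$ and $c$ are then both initial segments of $s$, they are comparable. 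Pairwise incomparability of $C''_j$ permits at most one $c \in C''_j$ to be an initial segment of $u$; were only finitely many further $c \in C''_j$ to properly extend $u$, then only finitely many sequences $\rho^{-1}(c)$ could support extensions of $u$, producing only finitely many such $u'$ --- contradicting $u \in D_i(B_j^{(2)})$. Thus infinitely many $c \in C''_j$ properly extend $u$, placing $u$ into $\mathrm{cl}_\mathrm{Tr}(C''_j)$ with infinitely many extensions there, so $u \in D_i(C''_j)$ as claimed.

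The two cases combined give $r_i(B_j) \leq r_i(C_j) \leq \beta_j < \alpha$, contradicting $r_i(B_j) \geq \alpha$. The principal obstacle is the case where $\rho(s)$ is a strictly shorter initial segment of $s$: there $B_j^{(2)}$ need not lie inside $\mathrm{cl}_\mathrm{Tr}(C_j)$ at all, and the bound on $r_i(B_j^{(2)})$ must be extracted via the counting argument of the third paragraph, which leans essentially on the pairwise incomparability property of elements of sets in $\mc D$.
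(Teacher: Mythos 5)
Your proof is correct, but it is organized differently from the paper's. Both arguments hinge on the same elementary observation --- if $\varphi(t)=\psi_j(s)$ is a common infinite extension of $t\in B$ and $s\in C_j$, then $t$ and $s$ are comparable --- but you and the author exploit it on opposite sides of the inclusion $\varphi(B)\subset\bigcup_j\psi_j(C_j)$. The paper modifies the covering skeletons: it replaces each $s\in C_j$ by the longer of $s$ and the corresponding $t$, producing a finite extension $\varphi_j(C_j)$ which is still in $\mc B_{\beta_j}$ (closure of broom classes under finite extensions, Remark \ref{remark: basic broom properties}), hence of rank $\leq\beta_j$ by Proposition \ref{proposition: rank of B}; since $B\subset \mathrm{cl}_\mathrm{Tr}\bigl(\bigcup_j\varphi_j(C_j)\bigr)$, the max-formula for ranks of unions gives the contradiction in one line, with no case analysis on the derivative. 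You instead decompose $B$ according to which $E_j$ absorbs each extension, and within each piece split according to whether $t\sqsubset\rho(t)$ or $\rho(t)\sqsubsetneq t$; the first case is plain monotonicity, and the second requires your derivative-inclusion claim $D_i(B_j^{(2)})\subset D_i(C''_j)$, whose counting argument (at most one $c\in C''_j$ below $u$, each $s$ contributing only finitely many nodes) is sound and correctly isolates where pairwise incomparability of $C_j$ is used. What your route buys is independence from the "finite extensions preserve broom class'' fact --- you only need $C_j\in\mc D$ and the rank machinery --- at the cost of the extra transfinite induction transporting $D_i(B_j^{(2)})\subset D_i(C''_j)$ to all iterates; what the paper's route buys is brevity, since absorbing both cases into the single definition of $\varphi_j$ lets Proposition \ref{proposition: rank of B} and Lemma \ref{lemma: rank and finite covers} do all the work.
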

\begin{proof}
Let $\alpha<\omega_1$, $B$ and $H$ be as above. For contradiction, assume that $H\subset \bigcup_{j=0}^k E_j $ holds for some $E_j\in \mc E_{\beta_j}$, $\beta_j<\alpha$.

By definition of $\mc E_{\beta_j}$, each $E_j$ is an infinite extension of some $B_j\in \mc B_{\beta_j}$ -- in other words, there are bijections $\psi_j:B_j \rightarrow E_j$ satisfying $\psi_j(s)\sqsupset s$ for each $s\in B_j$. Similarly, $H$ contains some infinite extension of $B$, which means there exists an injective mapping $\psi : B\rightarrow H$ satisfying $\psi(s)\sqsupset s$ for each $s\in B$. Clearly, we have
\begin{equation}\label{eq: psi B is covered by psi B_j}
\psi(B)\subset H \subset \bigcup_{j=0}^k E_j = \bigcup_{j=0}^k \psi_j(B_j) .
\end{equation}

Firstly, observe that if $t\in B$ and $s\in B_j$ satisfy $\psi_j(s)=\psi(t)$, the sequences $s$ and $t$ must be comparable. This means that for each $j\leq k$, the following formula correctly defines a mapping $\varphi_j : B_j\rightarrow \omega^{<\omega}$:
$$
\varphi_j(s):=
\begin{cases}
s,  & \textrm{ for } \psi_j(s) \notin \psi(B), \\
\psi^{-1}(\psi_j(s)), & \textrm{ for } s\sqsubset \psi^{-1}(\psi_j(s)), \\
s, & \textrm{ for } s\sqsupset \psi^{-1}(\psi_j(s)),
\end{cases}
$$
which satisfies $\varphi_j(s)\sqsupset s$ for each $s\in B_j$. Since no two elements of $B_j$ are comparable, it also follows that $\varphi_j$ is injective and thus $\varphi(B_j)$ is an extension of $B_j$. By Remark \ref{remark: basic broom properties}, this implies that $\varphi_j(B_j)\in\mc B_{\beta_j}$.

By \eqref{eq: psi B is covered by psi B_j}, for $t\in B$ there exists some $j\leq k$ and $s\in B_j$, such that $\psi_j(s)=\psi(t)$. It follows from the definition of $\psi_j$ that
\[ t=\psi^{-1}(\psi(t))=\psi^{-1}(\psi_j(s))\sqsubset \varphi_j(s) .\] In other words, we get
\[ B\subset \mathrm{cl}_\mathrm{Tr}\left( \bigcup_{j=0}^k \varphi_j(B_j) \right) .\]
We observe that this leads to a contradiction:
\begin{align*}
\alpha & = r_i(B) \leq r_i \left( \mathrm{cl}_\mathrm{Tr}\left( \bigcup_{j=0}^k \varphi_j(B_j) \right) \right) = r_i\left( \bigcup_{j=0}^k \varphi_j(B_j) \right) = \\
	& = \max_j \ r_i \left( \varphi_j(B_j) \right) \leq \max_j \beta_j < \alpha.
\end{align*}
\end{proof}

\subsection{Absolute complexity of \texorpdfstring{$X_{\mc E}$}{X(E)}} \label{section: absolute complexity of brooms}
In this section, we prove an upper bound on the absolute complexity of spaces $X_{\mc E}$ for any collection of infinite broom sets $\mc E$. Talagrand's earlier result will then imply that, in some cases, this bound is sharp.
\begin{proposition}[Complexity of $X_{\mc E}$]\label{proposition: XA are absolutely K alpha}
For any integer $m\in\omega$ and limit ordinal $\lambda < \omega_1$, we have
\begin{enumerate}[1)]
\item $\mc E\subset \mc E_m$ $\implies$ $X_{\mc E}$ is absolutely $\mc F_{2m+1}$;
\item $\mc E\subset \underset {\beta<\lambda} \bigcup \mc E_\beta$ $\implies$ $X_{\mc E}$ is absolutely $\mc F_{\lambda+1}$;
\item $\mc E\subset \mc E_{\lambda+m}$ $\implies$ $X_{\mc E}$ is absolutely $\mc F_{\lambda+2m+3}$.
\end{enumerate}
\end{proposition}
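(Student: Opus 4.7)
The plan is to show, in each case, the identity $X=Y_\alpha$ in every compactification $cX$, for a suitable $\alpha<\omega_1$. Combined with Lemma \ref{lemma: complexity of Y} and the already-established inclusion $X\subset Y_\alpha$, this yields the three bounds by taking $\alpha=m$ in Case 1, $\alpha=\lambda$ in Case 2, and $\alpha=\lambda+m+1$ in Case 3.

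The common step is the following dichotomy. Fix $x\in Y_\alpha$ and pick an admissible $\varphi:T_\alpha\to\mc S$ witnessing this. If $\widetilde{\varphi}(T_\alpha)$ has an infinite branch then Lemma \ref{lemma: Y(T) - IF case} gives $x\in\baire\subset X$; otherwise $\widetilde{\varphi}(T_\alpha)$ is well founded, and Lemma \ref{lemma: Y(T) - WF case} supplies a broom $B\in\widetilde{\mc B}_\alpha$ contained in $\widetilde{\varphi}(T_\alpha)$, whence $x\in\overline{\mc N(s)}^{cX}$ for every $s\in B$ and $r_i(B)=\alpha$ by Proposition \ref{proposition: rank of B}. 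Assume now, for contradiction, that $x\in cX\setminus X$: Lemma \ref{lemma: x in cX - X and H} supplies $H\subset X$, finite-covered by sets $E_1,\ldots,E_k\in\mc E$, satisfying $x\in\overline F\Rightarrow x\in\overline{F\cap H}$. In Cases 2 and 3 we pick $\psi(s)\in\mc N(s)\cap H$ for each $s\in B$; pairwise incomparability of the elements of $B$ makes $\psi$ injective, so $\psi(B)\subset H$ is an infinite extension of $B$. Since each $E_j\in\mc E_{\beta_j}$ with $\beta_j<\alpha$ (directly in Case 2 as $\alpha=\lambda$ is a limit and $\mc E\subset\bigcup_{\beta<\lambda}\mc E_\beta$; and in Case 3 because $\beta_j\leq\lambda+m<\lambda+m+1=\alpha$), Lemma \ref{lemma: E, H and finite unions} applied with $r_i(B)=\alpha$ contradicts the finite cover of $H$ and finishes these cases.

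Case 1 is the genuinely new input, since with $\alpha=m$ and $\mc E\subset\mc E_m$ the indices $\beta_j$ may equal $m=r_i(B)$, so Lemma \ref{lemma: E, H and finite unions} does not apply directly. The fix is to pigeonhole on the finite cover: for each $s\in B$ the inclusion $\mc N(s)\cap H\subset\bigcup_j(\mc N(s)\cap E_j)$ together with $x\in\overline{\mc N(s)\cap H}$ yields an index $j(s)$ with $x\in\overline{\mc N(s)\cap E_{j(s)}}$. A combinatorial analysis of how $B\in\widetilde{\mc B}_m$ sits against the underlying broom $B'_{j(s)}\in\mc B_m$ of $E_{j(s)}$ then shows that $\mc N(s)\cap E_{j(s)}$ is in fact finite: writing out the recursive handle-and-forking structures of $B$ and $B'_{j(s)}$, one checks level by level that any $\tau\in E_{j(s)}$ with $\tau\sqsupset s$ forces the successive handles of $B$ and $B'_{j(s)}$ to coincide and each successive forking entry of $B'_{j(s)}$ to match the corresponding entry of $B$, and injectivity of the forking sequences at each of the (at most $m$) levels bounds the total number of matches. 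Since finite subsets of $\baire$ are closed in $cX$, $x$ itself lies in $\mc N(s)\cap E_{j(s)}\subset\baire\subset X$, the required contradiction. The main obstacle is carrying out this matching argument uniformly in $m\geq 1$, presumably by induction on $m$; the base $m=0$ is immediate, as $\mc E\subset\mc E_0$ consists of singletons, which makes $X_{\mc E}$ compact and thus trivially absolutely $\mc F_1$.
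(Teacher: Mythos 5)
Your overall scheme --- proving $X=Y_\alpha$ in every compactification for $\alpha=m$, $\lambda$, $\lambda+m+1$ and then invoking Lemma \ref{lemma: complexity of Y} --- as well as your treatment of Cases 2 and 3, coincide with the paper's proof. The problem is Case 1, where your argument rests on a false intermediate claim. You assert that for $s\in B\in\widetilde{\mc B}_m$ and $E_{j(s)}\in\mc E_m$ the set $\mc N(s)\cap E_{j(s)}$ is finite, via a handle-and-forking matching argument. This is not true: nothing prevents the broom $B'_{j(s)}\in\mc B_m$ underlying $E_{j(s)}$ from having a handle that extends $s$, in which case \emph{every} element of $E_{j(s)}$ lies in $\mc N(s)$ and $\mc N(s)\cap E_{j(s)}=E_{j(s)}$ is infinite. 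Concretely, take $m=1$, $B=\{(n)\mid n\in\omega\}\in\widetilde{\mc B}_1$, $s=(0)$, and $B'_{j}=\{(0,n)\mid n\in\omega\}\in\mc B_1$ with handle $(0)$. Your injectivity-of-forking argument only constrains those elements of $B'_{j(s)}$ whose forking levels occur within $|s|$; once the structure of $B'_{j(s)}$ descends past the end of $s$, infinitely many of its elements extend $s$. Moreover, the conclusion you want to draw ($x\in X$) is genuinely unavailable at this stage: an infinite $E\in\mc E$ is closed and discrete in $X$, so in the compact space $cX$ it necessarily has accumulation points in $cX\setminus X$; that is, points $x\notin X$ with $x\in\overline{E}$ do exist, and the pigeonhole step alone cannot produce a contradiction.

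The repair (and the paper's actual argument) uses the observation that each $\mc N(s)\cap H$ is not merely nonempty but \emph{infinite}: if it were finite it would be closed in $cX$, forcing $x\in X$. Enumerating $B=\{s_n\mid n\in\omega\}$ and choosing for each $n$ distinct $\sigma_n^k\in H\cap\mc N(s_n)$ and finite sequences $s_n^k$ with $s_n\sqsubset s_n^k\sqsubset \sigma_n^k$, pairwise distinct in $k$, one obtains $\widetilde B:=\{s_n^k\mid n,k\in\omega\}$ with $D_i(\widetilde B)\supset B$, hence $r_i(\widetilde B)\geq m+1$, while $H$ still contains an infinite extension of $\widetilde B$. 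Since $\mc E\subset\mc E_m\subset\bigcup_{\beta<m+1}\mc E_\beta$, Lemma \ref{lemma: E, H and finite unions} now applies with rank $m+1$ and contradicts part 1 of Lemma \ref{lemma: x in cX - X and H}. In other words, the extra level of branching must be extracted from $H$ itself, not from a finiteness property of the covering brooms.
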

\begin{proof}
Suppose that $\lambda$ and $m$ are as above, $\mc E$ is a family of discrete subsets of $\baire$ and $cX$ is a compactification of the space $X:=X_{\mc E}$. Denote $\alpha_1 := m$, $\alpha_2:=\lambda$ and $\alpha_3:=\lambda+m+1$. Note that we are only going to use the following property of $\mc E$ and $\alpha_i$:
\begin{equation} \label{eq: main proposition cases}
\begin{split}
 & \textrm{$\mc E$ satisfies the hypothesis of } i) \textrm{ for } i=1 \ \ \ \ \ \implies \mc E\subset \bigcup_{\beta<{\alpha_i+1}} \mc E_\beta, \\
 & \textrm{$\mc E$ satisfies the hypothesis of } i) \textrm{ for } i\in\{2,3\} \implies \mc E\subset \bigcup_{\beta<{\alpha_i}} \mc E_\beta.
\end{split}
\end{equation}
We will show that $X=Y_{\alpha_i}$ holds in each of these cases\footnote{Where $Y_\alpha$ is given by Definition \ref{definition: Y_alpha}.}. Once we have this identity, the conclusion immediately follows from Lemma \ref{lemma: complexity of Y}.

Let $i\in\{1,2,3\}$. Suppose that there exists $x\in Y_{\alpha_i}\setminus X$ and let $\varphi : T_{\alpha_i} \rightarrow \omega^{<\omega}$ be an admissible mapping\footnote{\label{footnote: tilde admissible mapping}Recall that admissible mappings were introduced in Definition \ref{definition: admissible mapping}. The `tilde version' is defined as $\widetilde{\varphi}(S):=\mathrm{cl}_\mathrm{Tr}(\varphi (S))$. } witnessing that $x\in Y_{\alpha_i}$. Moreover, let $H\subset \baire$ be a set satisfying the conclusion of Lemma \ref{lemma: x in cX -- X and H}.

Recall that by definition of $Y_{\alpha_i}$, we have
\begin{equation} \label{eq: def of Y alpha}
\left(\forall t\in T_{\alpha_i}\right): x\in\ \overline{ \mc N(\varphi(t))}.
\end{equation}
By Lemma \ref{lemma: Y(T) -- IF case}, $\widetilde{\varphi} (T_{\alpha_i})$ contains no infinite branches. Therefore, we can apply Lemma \ref{lemma: Y(T) -- WF case} to obtain a $\widetilde{\mc B}_{\alpha_i}$-set\footnote{$\widetilde {\mc B}_\alpha$ is the `disjoint version' of broom collection $\mc B_\alpha$ (see Definitions \ref{definition: brooms} and \ref{definition: tilde brooms}).} $B\subset \widetilde{\varphi}( T_{\alpha_i})$. Since, by \eqref{eq: def of Y alpha}, $x$ belongs to $\overline{ \mc N(s)}$ for each $s\in B$, we conclude that $x\in\overline{H\cap \mc N(s)}$ holds for every $s\in B$ (Lemma \ref{lemma: x in cX -- X and H}, $(ii)$). In particular, all the intersections $H\cap \mc N(s)$ must be infinite. Note that for the next part of the proof, we will only need the intersections to be non-empty. The fact that they are infinite will be used in the last part of the proof.

We can now conclude the proof of $2)$ and $3)$. Let $i\in\{2,3\}$ and assume that the hypothesis of $i)$ holds. Since $H\cap \mc N(s)$ is non-empty for each $s\in B$, it follows that $H$ contains an infinite extension of $B$. Since $r_i(B)=\alpha_i$ holds by Lemma \ref{proposition: rank of B}, Lemma \ref{lemma: E, H and finite unions} yields that $H$ cannot be covered by finitely many elements of $\bigcup_{\beta<{\alpha_i}} \mc E_\beta$. By \eqref{eq: main proposition cases}, we have $\mc E\subset \bigcup_{\beta<{\alpha_i}} \mc E_\beta$. This contradicts the first part of Lemma \ref{lemma: x in cX -- X and H} (which claims that $H$ \emph{can} be covered by finitely many elements of $\mc E$).

For the conclusion of the proof of $1)$, let $i=1$ and assume by \eqref{eq: main proposition cases} that $\mc E\subset  \bigcup_{\beta<\alpha_i+1} \mc E_\beta$ holds. Compare this situation with the setting from the previous paragraph -- it is clear that if we show that $H$ in fact contains an infinite extension of some $\widetilde B$ with $r_i(\widetilde B)\geq \alpha_i+1$, we can replace $B$ by $\widetilde B$. We can then apply the same proof which worked for $2)$ and $3)$.

To find $\widetilde B$, enumerate $B$ as $B=\{ s_n | \ n\in\omega \} $. Since each $H\cap \mc N(s_n)$ is infinite, there exist distinct sequences $\sigma^k_n\in H$, $k\in\omega$, satisfying $s_n\sqsubset\sigma^k_n$. We use these sequences to obtain $s_n^k\in\omega^{<\omega}$ for $n,k\in\omega$, which satisfy $s_n\sqsubset s^k_n\sqsubset \sigma^k_n$ and $k\neq l$ $\implies$ $s^k_n\neq s^l_n$. We denote $\widetilde{B}:=\{ s^k_n | \ n,k\in\omega \}$. Clearly, this set satisfies $D_i(\widetilde B)\supset B$ and $H$ contains some infinite extension of $\widetilde B$. Because $r_i(B)=\alpha_i$ is finite, we have $r_i(\widetilde{B}) \geq 1+\alpha_i \overset{\alpha_i<\omega}{=} \alpha_i +1 > \alpha_i = r_i(B)$, which completes the proof.
\end{proof}

Applying Proposition \ref{proposition: XA are absolutely K alpha} to the construction from \cite{talagrand1985choquet} immediately yields the following corollary:
\begin{theorem}\label{theorem: main theorem}
For every even $4\leq \alpha < \omega_1$, there exists a space $T_\alpha$ with properties
\begin{enumerate}[(i)]
\item $T_\alpha$ is an $F_{\sigma\delta}$ space;
\item $T_\alpha$ is not an absolute $\mc F_\alpha$ space;
\item $T_\alpha$ is an absolute $\mc F_{\alpha+1}$ space.
\end{enumerate}
\end{theorem}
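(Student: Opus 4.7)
The plan is to take $T_\alpha := X_{\mc E_\alpha^\star}$ for a family $\mc E_\alpha^\star$ of infinite broom sets chosen to calibrate the upper bound from Proposition \ref{proposition: XA are absolutely K alpha} exactly to $\mc F_{\alpha+1}$. Writing $\alpha = \lambda + 2m$ in Cantor normal form, I would set $\mc E_\alpha^\star := \mc E_m$ when $\lambda = 0$ (so $m \geq 2$); $\mc E_\alpha^\star := \bigcup_{\beta<\lambda}\mc E_\beta$ when $\lambda$ is a limit and $m = 0$; and $\mc E_\alpha^\star := \mc E_{\lambda + m - 1}$ when $\lambda$ is a limit and $m \geq 1$. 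Property~1 is immediate because $\mc E_\alpha^\star \subset \mc E_{\omega_1} \subset \mc D$, so the proposition opening Section \ref{section: broom spaces and parametrization} gives $T_\alpha \in \fsd(\beta T_\alpha)$. Property~3 is a direct inspection: the three cases above map onto cases 1, 2, 3 of Proposition \ref{proposition: XA are absolutely K alpha}, and in each of them the arithmetic produces absolute $\mc F_{\alpha+1}$ (for instance, in the finite case $\mc E \subset \mc E_m$ yields absolute $\mc F_{2m+1} = \mc F_{\alpha+1}$).

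The real content is Property~2. The plan is to exhibit, for each of the three choices above, a compactification $cT_\alpha$ in which $T_\alpha$ fails to lie in $\mc F_\alpha(cT_\alpha)$. Following Talagrand's blueprint from \cite{talagrand1985choquet}, the idea is to adjoin a single distinguished limit point $x^\ast \ne \infty$ whose neighborhood trace on $\baire$ is controlled by a broom $B^\ast \in \widetilde{\mc B}_{\alpha^\ast}$ whose rank $\alpha^\ast$ strictly exceeds every rank appearing inside $\mc E_\alpha^\star$ (concretely, $\alpha^\ast = m+1$, $\alpha^\ast = \lambda$, or $\alpha^\ast = \lambda+m$ in the three cases). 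Such a $cT_\alpha$ can be assembled by gluing an appropriate quotient onto $\beta T_\alpha$ via Lemma \ref{lemma: quotient of K} and Proposition \ref{proposition: extending a compactification}, with the closure of an infinite extension of $B^\ast$ inside $\beta T_\alpha$ being collapsed down to $x^\ast$.

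One then argues that the presence of $x^\ast$ obstructs any $\mc F_\alpha$-presentation of $T_\alpha$ inside $cT_\alpha$. Reversing the admissible-mapping analysis of Lemma \ref{lemma: Y(T) - WF case} and combining it with Lemma \ref{lemma: x in cX - X and H}, any $\mc F_\alpha$-superset of $T_\alpha$ that avoided $x^\ast$ would produce a cover of the trace set $H$ at $x^\ast$ by finitely many elements of $\bigcup_{\beta < \alpha^\ast}\mc E_\beta$, directly contradicting Lemma \ref{lemma: E, H and finite unions} since $B^\ast$ has rank $\alpha^\ast$ by Proposition \ref{proposition: rank of B}. The main obstacle, which I expect to occupy most of the technical work, is engineering $cT_\alpha$ so that the neighborhood filter at $x^\ast$ is rich enough to force this contradiction while keeping the upper bound from Property~3 intact; the rank estimates of Proposition \ref{proposition: rank of B} and the hereditarity machinery of Section \ref{section: subspaces} are what should make this delicate balance possible.
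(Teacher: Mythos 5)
Your treatment of properties 1 and 3 matches the paper: the paper likewise takes $T_\alpha=X_{\mc E}$ with $\mc E$ contained in $\mc E_n$, in $\bigcup_{\beta<\lambda}\mc E_\beta$, or in $\mc E_{\lambda+n}$ according to the same case split, gets property 1 from the proposition opening Section \ref{section: broom spaces and parametrization}, and derives property 3 from Proposition \ref{proposition: XA are absolutely K alpha}; your arithmetic in all three cases is correct. The genuine gap is property 2. The paper does not prove property 2 at all --- it takes the $T_\alpha$ to be Talagrand's own examples from \cite{talagrand1985choquet} and cites that paper for the fact that they are $\fsd$ but not absolutely $\mc F_\alpha$. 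Your attempt to re-derive this is where the argument breaks down.

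Concretely: the machinery of Sections \ref{section: sequences and brooms} and \ref{section: broom spaces and parametrization} is built for upper bounds. Lemmas \ref{lemma: Y(T) - WF case}, \ref{lemma: x in cX - X and H} and \ref{lemma: E, H and finite unions} are used to show that the one canonical presentation $Y_{\alpha_i}$ contains no point of $cX\setminus X$. A lower bound must show that \emph{no} $\mc F_\alpha$ set equals $X$ in $cT_\alpha$; an arbitrary $\mc F_\alpha$ set carries no admissible mapping, so it cannot be fed into Lemma \ref{lemma: Y(T) - WF case}, and ``reversing the admissible-mapping analysis'' is not an available step. Talagrand's actual lower-bound argument is a separate combinatorial argument tailored to a \emph{specific} subfamily $\mc E$; your choice of the full families $\mc E_m$, $\bigcup_{\beta<\lambda}\mc E_\beta$, $\mc E_{\lambda+m-1}$ is not known to work and is not what the paper uses (it says ``for some special $\mc E\subset\mc E_{\widetilde\alpha}$''). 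The proposed construction of $cT_\alpha$ is also problematic: if $B^\ast$ has rank exceeding every rank in $\mc E_\alpha^\star$, then by Lemma \ref{lemma: E, H and finite unions} an infinite extension $E$ of $B^\ast$ is not covered by finitely many members of $\mc E_\alpha^\star$, so $\infty$ lies in the closure of $E$ in $T_\alpha$, and collapsing $\overline{E}$ via Lemma \ref{lemma: quotient of K} need not restrict to a homeomorphism on $T_\alpha$ (Proposition \ref{proposition: extending a compactification} concerns extending compactifications of closed subspaces and does not address this). The correct route, and the one the paper takes, is to cite \cite{talagrand1985choquet} for properties 1 and 2 and reserve the new work for property 3.
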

\begin{proof}
The existence of spaces $T_\alpha$ which satisfy $(i)$ and $(ii)$ follows from \cite{talagrand1985choquet}. In this paper, Talagrand constructed a family $\mc A_T\subset \mc E_{\omega_1}$, which is both `rich enough' and almost-disjoint (that is, the intersection of any two families is finite). This is accomplished by a smart choice of bristles and forking sequences on the `highest level of each broom'.

He then shows that for a suitable $\widetilde \alpha$, any space $X_{\mc E}$ with $\mc E\supset \mc A_T \cap \mc E_{\widetilde \alpha}$, where $\mc E$ is almost-disjoint, satisfies $(i)$ and $(ii)$. In particular, this holds for $T_\alpha := X_{\mc A_T \cap \mc E_{\widetilde \alpha}}$. Note however that Talagrand was interested in the `maximal' version of the construction, which is the space $T:= X_{\mc A_T}$. We, on the other hand, will use the `intermediate steps' of his construction.

For us, the details of the construction are not relevant -- the only properties we need are $(i)$, $(ii)$ and the correspondence between $\alpha$ and $\widetilde \alpha$.\footnote{Note that in \cite{talagrand1985choquet}, the author uses slightly different (but equivalent) definition of broom families $\mc E_\alpha$, which shifts their numbering for finite $\alpha$ by $1$.} This correspondence is such that the broom class only increases with countable intersections (that is, odd steps of $\mc F_\alpha$), which translates to our notation as follows:
\begin{itemize}
\item $\alpha=2n\in\omega$, $\alpha\geq 4$ $\implies T_{\alpha}=X_{\mc E}$ for some ${\mc E}\subset \mc E_n$;
\item $\alpha<\omega_1$ is a limit ordinal $\implies T_\alpha=X_{\mc E}$ for some ${\mc E}\subset \underset{\beta<\alpha} \bigcup \mc E_\beta$;
\item $\alpha=\lambda+2n+2$ for $n\in\omega$ and limit $\lambda$ $\implies T_\alpha=X_{\mc E}$ for some ${\mc E}\subset \mc E_{\lambda+n}$.
\end{itemize}
In all the cases, it follows from Proposition \ref{proposition: XA are absolutely K alpha}  that $X_{\mc E}$ is absolutely $\mc F_{\alpha+1}$.
\end{proof}

If $\beta<\omega_1$ is the least ordinal for which some $X$ is an (absolute) $\mc F_\beta$ space, we say that the \emph{(absolute) complexity} of $X$ is $\mc F_\beta$. Using this notation, Theorem \ref{theorem: main theorem} can be rephrased as ``for every odd $5\leq \alpha<\omega_1$, there exists an $\fsd$ space whose absolute complexity is $\mc F_\alpha$''. By modifying the spaces from Theorem \ref{theorem: main theorem}, we obtain the following result:

\begin{corollary}\label{corollary: non K alpha space which is abs K alpha+1}
For every two countable ordinals $\alpha\geq \beta \geq 3$, $\alpha$ odd, there exists a space $X_{\alpha,\beta}$, such that
\begin{enumerate}[(i)]
\item the complexity of $X_{\alpha,\beta}$ is $\mc F_\beta$;
\item the absolute complexity of $X_{\alpha,\beta}$ is $\mc F_\alpha$.
\end{enumerate}
\end{corollary}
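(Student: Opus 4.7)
The plan is to realise $X_{\alpha,\beta}$ as a topological disjoint sum $T\oplus M_\beta$ in which $T$ contributes the absolute complexity $\mc F_\alpha$ and $M_\beta$ raises the mere complexity to $\mc F_\beta$ without inflating the absolute one. Concretely, for $\alpha=3$ I set $T:=\emptyset$, while for odd $\alpha\ge 5$ I take $T:=T_{\alpha-1}$ from Theorem~\ref{theorem: main theorem}, which is an $\fsd$ space of absolute complexity exactly $\mc F_\alpha$. For $M_\beta$ I pick any separable metrizable space of exact complexity $\mc F_\beta$; such spaces exist at every level $\beta\ge 3$ of the Borel hierarchy (\eg $\mb Q^\omega$ at level $3$, and $\Pi^0_\beta$- or $\Sigma^0_\beta$-complete Borel sets inside a Polish space provide examples at every higher level). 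By Theorem~\ref{theorem: separable metrizable}, $M_\beta$ is automatically absolutely $\mc F_\beta$.

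For the complexity of $X_{\alpha,\beta}$ I would choose compactifications $cT$, $cM$ witnessing $T\in\mc F_3(cT)\subset\mc F_\beta(cT)$ and $M_\beta\in\mc F_\beta(cM)$ (with $cM$ metrizable), and form the topological sum $cT\oplus cM$, which is compact Hausdorff and densely contains $X_{\alpha,\beta}$. Since $cT$ and $cM$ are clopen in $cT\oplus cM$, both summands retain class $\mc F_\beta$ there, yielding $X_{\alpha,\beta}\in\mc F_\beta(cT\oplus cM)$. The matching lower bound is immediate from the first part of Proposition~\ref{proposition: absoluteness is hereditary}: any $\mc F_\delta$ bound on $X_{\alpha,\beta}$ restricts to the closed subspace $M_\beta$, so $\delta\ge\beta$ by the choice of $M_\beta$.

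For the absolute complexity, I fix an arbitrary compactification $c$ of $X_{\alpha,\beta}$. The closures $\overline{T}^c$ and $\overline{M_\beta}^c$ are compactifications of $T$ and $M_\beta$ respectively (each summand is dense in its closure and carries its original subspace topology, since $X_{\alpha,\beta}$ itself carries the subspace topology of $c$). Absoluteness of the complexity of $T$ gives $T\in\mc F_\alpha(\overline{T}^c)$, and Theorem~\ref{theorem: separable metrizable} applied to the metrizable $M_\beta$ gives $M_\beta\in\mc F_\beta(\overline{M_\beta}^c)\subset\mc F_\alpha(\overline{M_\beta}^c)$. Since both closures are closed in $c$, both inclusions lift to $c$, so $X_{\alpha,\beta}=T\cup M_\beta\in\mc F_\alpha(c)$. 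The matching lower bound follows from the second part of Proposition~\ref{proposition: absoluteness is hereditary} applied to the closed subspace $T\subset X_{\alpha,\beta}$ when $\alpha\ge 5$, and directly from $X_{3,3}=M_3$ failing to be absolutely $\mc F_2$ (\ie not $\sigma$-compact) when $\alpha=\beta=3$.

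The only delicate point is the upper bound for absolute complexity: it must be established in \emph{every} compactification of $T\oplus M_\beta$, not just in the natural sum $cT\oplus cM$. This is precisely what the observation ``$\overline{T}^c$ and $\overline{M_\beta}^c$ are always compactifications of the summands'' secures, allowing the absoluteness of $T$ and the metrizable absoluteness of $M_\beta$ to be invoked in any $c$. Everything else is a mechanical combination of Theorem~\ref{theorem: main theorem}, Theorem~\ref{theorem: separable metrizable} and Proposition~\ref{proposition: absoluteness is hereditary}.
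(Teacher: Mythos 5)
Your proposal is correct and follows essentially the same route as the paper: the space is the topological sum of the Talagrand-type space $T_{\alpha-1}$ from Theorem \ref{theorem: main theorem} with a separable metrizable space of exact complexity $\mc F_\beta$ (handled via Theorem \ref{theorem: separable metrizable}), with lower bounds supplied by Proposition \ref{proposition: absoluteness is hereditary}. Your treatment is in fact slightly more explicit than the paper's on the one delicate point — verifying the $\mc F_\alpha$ upper bound in an \emph{arbitrary} compactification by passing to the closures of the two summands — which the paper leaves implicit.
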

\begin{proof}
If $\alpha=\beta$, this is an immediate consequence of absoluteness of Borel classes. To make this claim more precise, let $P$ be some uncountable Polish space. By Remark \ref{remark: F,G and M,A} $(ii)$, $\mc F_\beta(P)$ corresponds to some Borel class $\mc C(P)$. We define $Z_\beta$ as one of the subspaces of $P$, which are of the class $\mc C(P)$, but not of the `dual' class -- that is, if $\mc C(P)=\Sigma^0_\gamma(P)$ for some $\gamma<\omega_1$, then $Z_\beta\in \Sigma^0_\gamma(P)\setminus \Pi^0_\gamma (P)$ (and vice versa for $\mc C(P)=\Pi^0_\gamma(P)$). For the existence of such a set, see for example Corollary 3.6.8 in \cite{srivastava2008course}.

Since $Z_\beta$ is separable and metrizable, Theorem \ref{theorem: separable metrizable} guarantees that it is an absolute $\mc F_\beta$ space. However, it is not of the class $\mc F_{\beta'}$ for any $\beta'<\beta$, because that would by Remark \ref{remark: F,G and M,A} imply that $Z_\beta$ belongs to $\mc F_{\beta'} (P)\subset \Pi^0_\gamma(P)$. This shows that if $\beta=\alpha$, the space $X_{\alpha,\beta}:=Z_\beta$ has the desired properties.

If $\beta<\alpha$, we define $X$ as the topological sum of $Z_\beta$ and the space $T_{\alpha-1}$ from Theorem \ref{theorem: main theorem}. Since $T_{\alpha-1}\in \fsd(\beta T_{\alpha-1})=\mc F_3(\beta T_{\alpha-1})$ and $\beta \geq 3$, we get that $X_{\alpha,\beta}$ is of the class $\mc F_\beta$ in the topological sum  $\beta X_{\alpha,\beta} = \beta Z_\beta \oplus \beta T_{\alpha-1}$. By previous paragraph, it is of no lower class in $\beta X_{\alpha,\beta}$. By Theorem \ref{theorem: main theorem} (and the previous paragraph), $X_{\alpha,\beta}$ also has the correct absolute complexity.
\end{proof}

To get a complete picture of possible combinations of complexity and absolute complexity for $\mc F$-Borel spaces, it remains to answer the following questions:
\begin{problem}\label{problem: X alpha beta}
\begin{enumerate}[(i)]
\item Let $4\leq\alpha<\omega_1$ be an even ordinal number. Does there exists a space $X_\alpha$, whose absolute complexity is $\mc F_\alpha$ and complexity is $\mc F_\beta$ for some $\beta<\alpha$? What is the lowest possible value of $\beta$?
\item If a space $X$ is $\mc F$-Borel in every compactification, is it necessarily absolutely $\mc F_\alpha$ for some $\alpha<\omega_1$?
\end{enumerate}
\end{problem}
One could expect that Corollary \ref{corollary: non K alpha space which is abs K alpha+1} might also hold for even $\alpha$, so that we should be able to find an $\fsd$ space answering the first part of Problem \ref{problem: X alpha beta} in positive.

\section*{Acknowledgment}
I would like to thank my supervisor, Ondřej Kalenda, for numerous very helpful
suggestions and fruitful consultations regarding this paper. This work was supported by the research grant GAUK No. 915.


\end{document}